\newtheorem{theorem}{Theorem}[section]
\newtheorem{lemma}[theorem]{Lemma}
\newtheorem{proposition}[theorem]{Proposition}
\newtheorem{corollary}[theorem]{Corollary}
\newtheorem{assumption}[theorem]{Assumption}
\theoremstyle{remark}
\newtheorem{remark}[theorem]{Remark}
\newtheorem{example}[theorem]{Example}
\newcommand\numberthis{\addtocounter{equation}{1}\tag{\theequation}}
\newcommand{\asto}{%
	\mathrel{\vbox{\offinterlineskip\ialign{%
				\hfil##\hfil\cr
				$\scriptstyle a.s.$\cr
				$\longrightarrow$\cr
}}}}
\newcommand{\dto}{%
	\mathrel{\vbox{\offinterlineskip\ialign{%
				\hfil##\hfil\cr
				$\scriptstyle d$\cr
				$\longrightarrow$\cr
}}}}
\newcommand{\vto}{%
	\mathrel{\vbox{\offinterlineskip\ialign{%
				\hfil##\hfil\cr
				$\scriptstyle v$\cr
				$\longrightarrow$\cr
}}}}
\newcommand{\rmd}{\mathrm{d}}
\newcommand{\toi}{\to\infty}
\newcommand{\EE}{\mathbb{E}}
\newcommand{\ZZ}{\mathbb{Z}}
\newcommand{\PP}{\mathbb{P}}
\newcommand{\NN}{\mathbb{N}}
\newcommand{\RR}{\mathbb{R}}
\newcommand{\MM}{\mathbb{M}}
\newcommand{\wid}{\widetilde}
\newcommand{\eind}{\stackrel{d}{=}}
\newcommand{\footremember}[2]{%
    \footnote{#2}
    \newcounter{#1}
    \setcounter{#1}{\value{footnote}}%
}
\title{On renewal theory for cluster processes}
\author{%
  Bojan Basrak\footremember{BB}{Department of Mathematics, Faculty of Science, University of Zagreb, Bijeni\v{c}ka 30, 10000 Zagreb, Croatia;
  \href{mailto:bbasrak@math.hr}{bbasrak@math.hr}}%
  \and Marina Dajakovi\'{c}\footremember{MD}{Faculty of Electrical Engineering, Mechanical Engineering and Naval Architecture, University of Split, Ru\dj era Bo\v{s}kovi\'ca 32, 21000 Split, Croatia; \href{mailto: marina.dajakovic@fesb.hr}{marina.dajakovic@fesb.hr} (corresponding author)}%
  }
\date{}
\begin{document}
	
\maketitle

\begin{abstract}{	
\noindent We prove several forms of renewal theorem  tailored to renewal processes with marks and clusters. In particular, for an i.i.d.\ sequence $(\xi_i,X_i)_{i \geq 0}$, where $\xi_0$ denotes a finite point process on $\RR$ and $X_0$ denotes a nonnegative random variable of finite mean, we consider the renewal sequence $T_i = X_0+\cdots + X_i$, $i \geq 0$, and corresponding renewal cluster process $ \xi(\cdot )=\sum_{i\geq0}\xi_i(\,\cdot -T_i)$.
Under mild assumptions on the distribution of $(\xi,X)$,   we show by coupling methods that the generalized versions of Blackwell's renewal theorem, key renewal theorem, extended renewal theorem and elementary renewal theorem still hold, even with dependence between $\xi_i$'s and $X_i$'s.}
\newline

\noindent\textbf{Keywords: }Marked point processes; limit theorems; Blackwell's renewal theorem; key renewal theorem; coupling
\newline
\noindent\textbf{MSC2020: }60K05; 60G55; 60B10
\end{abstract}

\section{Introduction}

In its classical form,  renewal theory studies the steps of a renewal process made before or after some distant time $t$.
Renewal theorems have been shown to hold in various forms and very general settings, we refer to \cite{res92}, \cite{tho00} or unpublished monograph~\cite{als13} for general theory, and to \cite{ko08}, \cite{joc12} or \cite{iks16} for some interesting extensions. Here we prove several types of renewal theorem tailored to renewal processes with marks or clusters.  Random models of this type have been suggested repeatedly in applied literature, see for instance \cite{onof00} and \cite{mm10} for  applications to environmental sciences and insurance respectively. Suppose that at each step  $T_i = X_0+ X_1+\cdots + X_i$, $i \geq 0$, of a renewal process we observe multiple events scattered around it in a way which is possibly dependent on the last interarrival time. One of our main goals is to describe the distribution of those points beyond time $t$, for  $t\toi$.
The similar types of dependence between the steps of the renewal process and the shapes of the clusters we allow in the sequel have already been considered in different settings, see for instance \cite{mar15} in the context of c\`adl\`ag processes, or \cite{ag06}, but in the context of perturbed random walks.

The paper is organized as follows: in the next section we present a version of extended renewal theorem for renewal process with marks which will serve as the main tool in the rest of the article, and which seems of independent interest. 
Using it, in Section~\ref{sec:main}  we deduce 
versions of Blackwell's renewal theorem, key renewal theorem, extended renewal theorem and elementary renewal theorem even in the presence of dependent clusters. Note, some simple results of this type exist, but only for two special Poisson cluster processes, going back to \cite{lew64} and \cite{vere70}.
We finally discuss two examples, one of which is of a special Poissonian type, at the end of Section \ref{sec:main}.
The proofs are postponed to the Section~\ref{sec:proofs}.

\section{Extended renewal theorem for marked renewal process}

In this section, we introduce the notation and recall same basic facts about the vague convergence on the space of counting measures. For a general Polish space   \(\mathbb{S}\) let \(\mathcal{B}(\mathbb{S})\) be a Borel \(\sigma\)--algebra on \(\mathbb{S}\). In what follows, either \(\mathbb{S}=\RR\) or \(\mathbb{S}=\RR\times\MM\), where \(\MM\) is a Polish space for marks. We declare a set  \(B \) in \( \RR\)  or in \(  \RR \times \MM\) bounded if \(B \subseteq [-h,h]\) or \(B\subseteq [-h,h] \times \MM\), for some \(h>0\), respectively. 
Families of such bounded sets define  a {\em boundedness} on $\mathbb{S}$ in terminology used by \cite{bp19}.
Denote by  $M_p(\mathbb{S})$  the space of point measures  on  $\mathbb{S}$ which are finite on all bounded sets.
Let \(t\in\RR\) and \(m\in M_p(\mathbb{S})\). The shifted measure \(\theta_t m\) is defined by \(\theta_t m(A)=m(A+t)\), \(A\in\mathcal{B}(\mathbb{R})\) or \(\theta_t m(A\times B)=m(\{A+t\}\times B)\), \(A\in\mathcal{B}(\mathbb{R})\), \(B\in\mathcal{B}(\mathbb{\MM})\),  on \(M_p(\RR)\) or \(M_p(\RR\times\MM)\) respectively.
Recall that a sequence \(m_1,m_2,\ldots \in M_p(\mathbb{S})\) is said to converge vaguely to a measure \(m \in M_p(\mathbb{S})\), denoted by \(m_n \vto m\), if 
$
\int f \rmd m_n \to \int f \rmd m\,,
$
as \(n \toi\), for all continuous and bounded real-valued functions \(f\) on \(\mathbb{S}\) with bounded support. 
Convergence in distribution of point processes is always considered with respect to the corresponding vague topology and is denoted by \("\dto"\).

Consider i.i.d.\ pairs $(\wid W_i,\wid X_i)\,,{i\geq0} \in \mathbb{M} \times \RR_+$ with arbitrarily dependent \(\wid W_1\) and \(\wid X_1\), where \(\mathbb{M}\) is a general Polish space and \(\RR_+:=[0,\infty\rangle\). However, for notational convenience, and without loss of generality, we will assume in the sequel that $
\wid X_i = \varphi(\wid W_i)\,, i \geq0\,,
$
for some continuous function $\varphi : \mathbb{M}\to \RR_+$. If this is not the case, one can introduce the sequence \(\overline W_i=(\wid W_i, \wid X_i)\,, i\geq0\) on the space \(\mathbb{M}\times\mathbb{R}\) which is clearly Polish, and use the projection on the second coordinate as the mapping \(\varphi\). Furthermore, suppose that random variables
$
\wid X_i\,, i \geq0\,,
$
 satisfy $\mu  = \EE \wid X_1 \in \langle0,\infty\rangle$ and have a {\em nonarithmetic} distribution, i.e.\ their law is not concentrated on the set of the form $ h \ZZ$.
Define 
$
 \wid T_0 = 0 \text{ and }
 \wid T_i = \sum_{j=1}^i  \wid X_j\,,  i\geq1\,. 
$
One can form a two-sided renewal process with arrival times \(\cdots<\wid T_{-2}<\wid T_{-1}<\wid T_0 = 0<\wid T_1 <\wid T_2\cdots\), by extending the pure renewal process \((\wid T_i)_{i\geq0}\) on \(\RR_+\) in the obvious way: let \(\wid W_{-i}\) be an independent copy of \(\wid W_i\), for all \(i\geq1\), and define
$
\wid T_{-i} = - \sum_{j=-i+1}^{0} \wid  X_{j}\,, i \geq 1\,,
$
where \( \wid X_{-i} = \varphi(\wid W_{-i})\).
Consider the following point process
$
\tilde \eta  =  \sum_{i\in\ZZ} \delta_{(\wid T_i,\wid W_i)}\,.
$
By construction,
$
\theta_{\wid{T}_k}  \tilde \eta 
 \stackrel{d}{=} \tilde \eta\,, 
$
for all \(k\), hence $\tilde\eta$ is called {\em point stationary} marked renewal processes. 

Introduce next an additional random element \(W^*\) in $\mathbb{M}$ independent of the sequence \((\wid W_i)_{i\neq0}\) with  the distribution of $\wid W_1$ biased by \(\wid X_1\), i.e.
$\EE f(W^*)= \EE\Big[\wid X_1 f(\wid W_1)\Big] /\mu \,,
$
for all bounded measurable real-valued functions \(f\).
Denote $X^* = \varphi(W^*)$,
and let \(U\) denote a uniform random variable on \([0,1]\)  independent of $W^*, (\wid W_i)_{i\neq0} $. Define
$
 	T_0 = U X^* \mbox{ and }  	T_{-1} = - (1- U) X^*\,,
$
and then recursively
$
T_i = T_0 + \sum_{j=1}^{i} \wid X_j\,,
T_{-(i+1)}=T_{-1}-\sum_{j=-i}^{-1} \wid X_j\,,   i \geq 1
$
and set 
$
    W_0 = W^*\,, W_i = \wid W_i\,, \text{ for all }  i \not = 0\,. 
$
Then
$$
 	\eta  =  \sum_{i\in\ZZ} \delta_{ (T_i,  W_i)}
$$
is a {\em  stationary} marked renewal point process (cf.\ \cite{sig06}, Example 8.1.8).

Finally, we introduce a  {\em delayed  marked renewal process} on $\RR_+$ which is not assumed to be stationary in either sense. Let $(W'_0,X'_0)$ be a random element in $\mathbb{M}\times\RR_+$ which is independent of an i.i.d.\ \(\mathbb{M}\)-valued sequence
 $(W'_i)_{i \geq 1}$ with the property \(W'_i \eind \wid W_1\), for all \(i\geq1\). Denote  $ X'_i = \varphi( W'_i)$, for all $ i \geq 1$ and
 $T'_i = \sum_{j=0}^i X'_i$, for all $ i \geq 0$. Consider now 
 marked renewal  process 
 \[
 \eta'  =  \sum_{i=0}^\infty \delta_{ (T'_i,  W'_i)}\,.
 \] 
 We sometimes identify $\eta'$ with the sequence $(T'_i,  W'_i)_{i\geq0}$.

An  extended renewal theorem  for random walks appears as Theorem 12.7  of \cite{kal17}. We show that the corresponding result holds  for  marked renewal processes  as well. It is not difficult to see that one can use it to deduce the corresponding theorem in \cite{kal17} by taking excursions of random walk between two consecutive ladder points as marks. Actually, a special case of this was proved analytically in \cite{mar15}, see Lemma 3.1 therein. Related problems in renewal theory were studied earlier by \cite{imm17}. Our main result here  is that the  extended renewal theorem holds more generally. We base our probabilistic proof on coupling ideas, but postpone it until Section~\ref{sec:proofs}.

\begin{theorem}[Extended renewal theorem for marked renewal processes]
 	\label{lm0}
    Suppose \((W'_0,X'_0)\) is a random element in $\MM\times\RR_+$  independent of an i.i.d.\ sequence \((W'_i,X'_i)_{i\geq1}\) with values in \(\MM\times\RR_+\), where \(X'_1\) has a nonarithmetic distribution with a finite positive mean. Let \(\eta'\) be a corresponding delayed marked renewal process on \(\RR_+\) with a stationary version \(\eta\). Then
 	\[
 	\theta_t \eta' \dto \eta\,,
 	\] 	
 	as $t \toi$ with respect to the vague topology on $M_p(\RR \times \mathbb{M})$.
\end{theorem}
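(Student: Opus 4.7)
The plan is a coupling proof. I would construct the delayed process $\eta'$ and its stationary version $\eta$ on a common probability space so that, past some a.s.\ finite random time $\tau$, the two point measures are identical on $(\tau,\infty)\times\MM$. Since any continuous bounded $f$ on $\RR\times\MM$ with bounded support has $\supp f\subseteq [-h,h]\times\MM$ for some $h>0$, for $t>\tau+h$ the window $[t-h,t+h]\times\MM$ lies entirely in the coupled region, whence $\int f\,\rmd\theta_t\eta' = \int f\,\rmd\theta_t\eta$ a.s. Combined with the point-stationarity of $\eta$ this yields $\theta_t\eta'\dto\eta$ in the vague topology on $M_p(\RR\times\MM)$.

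The core step is to match the arrival times. Since the sequences $X'_i$ for $i\geq1$ and $\wid X_i=\varphi(\wid W_i)$ are i.i.d.\ with the same nonarithmetic distribution of finite mean $\mu$, I would invoke the classical coupling of delayed renewal sequences on $\RR_+$, see e.g.\ \cite{tho00}, to produce on an enlarged probability space versions of $(T'_i)_{i\geq 0}$ and $(T_i)_{i\geq 0}$ together with a.s.\ finite random indices $M,N$ such that $T'_{M+k}=T_{N+k}$ for all $k\geq 0$; I then set $\tau=T'_M$. The essence of this construction is that the difference random walk $\sum_{j=1}^n (X'_j-\wid X_j)$ has zero mean and nonarithmetic step distribution and is therefore recurrent, and in the nonarithmetic case an exact (not merely asymptotic) match can be forced in finite time.

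I would then extend the coupling to the marks. Because $(W'_i)_{i\geq 1}$ and $(\wid W_i)_{i\geq 1}$ are i.i.d.\ with the common law of $\wid W_1$, the construction above can be arranged so that in addition one has $W'_{M+k}=\wid W_{N+k}$ for every $k\geq 1$; the simplest way to do this is to drive both mark tails past the coupling indices by a single common i.i.d.\ source, which is compatible with the deterministic identities $X'_i=\varphi(W'_i)$ and $\wid X_i=\varphi(\wid W_i)$. Consequently $\eta'$ and $\eta$ coincide as marked point measures on $(\tau,\infty)\times\MM$, and since $\eta$ is two-sided, every bounded window in $\RR\times\MM$ shifted by a large enough $t$ falls inside the coupled region, completing the chain of implications started in the first paragraph.

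The main obstacle is precisely the renewal coupling of the arrival patterns, since the initial pair $(W'_0,X'_0)$ of the delayed process has arbitrary law while the stationary version $\eta$ has a $0$th interval that is size-biased by $\wid X_1/\mu$ and uniformly centered at $0$; the nonarithmetic hypothesis on $X'_1$ is exactly what is needed to produce the a.s.\ finite exact-coupling time $\tau$ reconciling these two starting distributions. Once this is in place, matching the marks is essentially automatic, so the bulk of the work is contained in the renewal coupling, which in effect reduces the marked problem to the classical unmarked one.
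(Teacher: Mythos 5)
Your plan breaks at the very first load-bearing step: you claim that because the step distribution is nonarithmetic, one can \emph{exactly} couple the delayed and stationary renewal sequences, producing a.s.\ finite indices $M,N$ with $T'_{M+k}=T_{N+k}$ for all $k\ge0$. That is false in this generality. Exact coupling of renewal processes requires the increment law to be \emph{spread out} (to have a nontrivial absolutely continuous convolution component), not merely nonarithmetic. A concrete counterexample: let $X'_1$ take the values $1$ and $\sqrt 2$ with probability $1/2$ each. The law is nonarithmetic, but the difference walk $\sum_j(X'_j-\wid X_j)$ lives on a countable set and never hits $0$ (nor any other fixed target) with positive probability; there is no a.s.\ finite exact-coupling time. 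What Chung--Fuchs recurrence gives you in the nonarithmetic setting is only an \emph{$\varepsilon$-coupling}: the difference walk enters $[0,\varepsilon)$ infinitely often a.s.\ for every $\varepsilon>0$.

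That distinction is precisely why the paper's proof is not the one-paragraph argument you sketch. After the $\varepsilon$-coupling (the paper constructs it via a Rademacher-sequence trick so that the switched sequence still has the right law, which is its Lemma~\ref{lm:1}), the two point measures agree in their marks but their arrival times are only $\varepsilon$-close. One then has to (a) deduce that the pair $(T'_{\sigma'(t)}-t,\,W'_{\sigma'(t)})$ converges in distribution to $(T_0,W_0)$ by squeezing between events and letting $\varepsilon\to0$ (Step~4); (b) upgrade this to convergence of the entire shifted process using a Skorokhod representation and a continuous-mapping lemma for the map from $(t_0,(w_i)_i)$ to $\sum_i\delta_{(t_0+\sum_{j\le i}\varphi(w_j),w_i)}$ (Lemma~\ref{4-lem inc}, Step~5); and (c) extend from $M_p([0,\infty\rangle\times\MM)$ to $M_p(\RR\times\MM)$ by a shifting trick on test functions (Step~6). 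Your final paragraph about the $0$th-interval mismatch and your mark-coupling remark are fine in spirit, but none of (a)--(c) can be skipped once you replace exact coupling by $\varepsilon$-coupling, which under the stated hypotheses you must.
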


\section{Renewal theorems for cluster processes}\label{sec:main}

We henceforth assume
that \((X'_i)_{i\geq1}\) is a sequence of nonnegative random variables with a {\em nonarithmetic}  distribution and a finite  mean \(\mu >0\).
Furthermore, let \((\xi'_i)_{i\geq1}\) be a sequence of point processes on \(\RR\) such that the pairs \(W'_i=(\xi_i',X_i')\) are independent and identically distributed.  
Note that \(\xi'_1\) and \(X'_1\) are allowed to be dependent.
Assume further that a  random  pair $W'_0=(\xi_0',X'_0)$ also takes values in $M_p(\RR) \times \RR_+$ and is independent of the i.i.d.\ sequence \((W'_i)_{i\geq1}\). Let \((T'_i)_{i\geq0}\) be a renewal sequence with increments \(X'_i\), i.e. 
\(
T'_i=\sum_{j=0}^iX'_j\,,  i\geq0 .
\) 
The associated renewal cluster point process is given by
\begin{equation}\label{4-eq:def rcpp}
\xi'(B)=\sum_{i\geq0}\xi'_i(B-T'_i)\,,\quad  B\in\mathcal{B}(\mathbb{R})\,\,,
\end{equation}
where we assume that almost surely
\begin{equation}\label{eq:ex cond}
    \xi'(B)<\infty\,,
    \text{ for all bounded }
    B\in \mathcal{B}(\RR)\,.
\end{equation}
Following the terminology of \cite{bbk20}, we call \(\zeta'=\sum_{i\geq0}\delta_{T'_i}\) the parent process and \(\xi'_i\), \(i\geq0\), the descendant process.
Condition \eqref{eq:ex cond} is often encountered in the literature, see e.g.\ p. 245 in \cite{kal17} or relation (2.3.9) in \cite{bbk20}.
In Remark \ref{rmk:exist} we give one simple sufficient condition for it to hold.

Each $\xi'_i$ has a representation
\[
\xi'_i = \sum_{j=1}^{L'_i}\delta_{T'_{ij}}\,,
\]
where \(L'_i\) is a random variable with values in \(\{0,1,\dots,+\infty\}\) and $T'_{i,1},\ldots, T'_{i,L'_i}$ is a sequence of random variables with values in \(\RR\), see Theorem 2.18 in \cite{kal21}.
We will assume henceforth that \(L'_i\) is almost surely finite, for all \(i\). This assumption is a standard in the literature, see for instance p.\ 176 or Problem 6.3 in \cite{dvj03}. Specifically, it is satisfied for Bartlett-Lewis processes, cf. Section \ref{sec:examples}, as well as for the Hawkes processes.
One can identify
\begin{align}\label{4-eq:wi}
W'_i=(\xi_i',X_i') \quad \text{ with }\quad (L'_i, (T'_{ij})_{j},X'_i)\,,
\end{align}
for all \(i\geq0\).
Thus, we study a renewal cluster point process constructed in such a way that around each point \(T'_i\) of the parent process one observes cluster of $L'_i$ other points translated by random times \((T'_{ij})_{j}\). More precisely,
\[
\xi'(B)
=\sum_{i\geq0}\sum_{j=1}^{L'_i}\delta_{T'_{ij}}(B-T'_i)
=\sum_{i\geq0}\sum_{j=1}^{L'_i}\delta_{T'_i+T'_{ij}}(B)\,,
\]
for every \(B\in\mathcal{B}(\RR)\).
Although the points of the parent process \(\zeta'\) are often unobserved, it is possible to include them in the cluster point process, which will be done 
in Section~\ref{sec:examples}. One can simply set \(T'_{i0}=0,\) for every \(i\) and write 
\[
\xi'_i=\sum_{j=0}^{L'_i}\delta_{T'_{ij}}\quad \text{ and }\quad \xi'=\sum_{i\geq0}\sum_{j=0}^{L'_i}\delta_{T'_i+T'_{ij}}\,.
\]
In this case, the \(i-\)th cluster size is \(L'_i+1\).

Clearly one can regard \(W'_i\) in \eqref{4-eq:wi} as  random elements in Polish space \(\mathbb{M}=\mathbb{N}_0\times\mathbb{R}^{\NN}\times\RR_+\), for all \(i\geq0\). Hence, \(\sum_{i\geq0}\delta_{(T'_i,W'_i)}\) is a marked renewal process on \(\mathbb{R}_+\times\MM\) and we can, as in the previous section, construct its stationary version.
It now follows that the process
\[
\xi=\sum_{i\in\mathbb{Z}}\sum_{j=1}^{L_i}\delta_{T_i+T_{ij}}\,,
\] 
provided that it is finite a.s. on bounded sets, can be referred to as a {\em  stationary renewal cluster point process}.

\begin{proposition}\label{thm:mm of stat pp}
Mean measure of a stationary renewal point process \(\xi\) equals
\[
\EE\xi(B)=\frac{1}{\mu}\EE L_1 Leb(B)\,,
\]
for all \(B\in\mathcal{B}(\RR)\).
\end{proposition}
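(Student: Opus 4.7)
The plan is to combine the shift-stationarity of $\eta$ with Campbell's formula for stationary marked point processes.

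First, writing each mark $w=(L,(T_j)_j,X)$ via \eqref{4-eq:wi}, one can present $\xi$ as $\Phi(\eta)$, where $\Phi$ is the measurable, shift-covariant map that sends a marked configuration $\sum_k\delta_{(t_k,w_k)}$ to $\sum_k\sum_{j=1}^{L_k}\delta_{t_k+T_{kj}}$. By the construction in Section~2, $\eta$ is spatially stationary, i.e.\ $\theta_t\eta\eind\eta$ for every $t\in\RR$, and since $\Phi$ commutes with translations, so is $\xi$. Consequently $\EE\xi$ is a translation-invariant locally finite measure on $\RR$ and must be of the form $\EE\xi(B)=c\,Leb(B)$ for some $c\in[0,\infty]$.

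To identify $c$, I would apply Campbell's formula to $\eta$. The parent process $\sum_i\delta_{T_i}$ has intensity $1/\mu$, which is precisely what the size-biasing of $W_0$ is engineered to produce, and the Palm mark distribution of a typical point of $\eta$ is the unbiased law of $\wid W_1$. Applied to the nonnegative test function $h(t,w)=\sum_{j=1}^{L(w)}\ind{t+T_j(w)\in B}$, for which $\int h\,\rmd\eta=\xi(B)$, the formula yields
\[
\EE\xi(B)=\frac{1}{\mu}\int_\RR\EE\Big[\sum_{j=1}^{\wid L_1}\ind{t+\wid T_{1j}\in B}\Big]\,\rmd t.
\]
Tonelli's theorem together with $\int_\RR\ind{t+s\in B}\,\rmd t=Leb(B)$ for each fixed $s$ then collapses the right-hand side to $\frac{1}{\mu}\EE\wid L_1\,Leb(B)=\frac{1}{\mu}\EE L_1\,Leb(B)$, which is the claim.

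The only delicate point, in my view, is recognising that despite $W_0=W^*$ being size-biased by $\wid X_1$ in the construction, the Palm mark distribution of the stationary process $\eta$ is the unbiased law of $\wid W_1$; this is exactly the inversion behind the size-biasing and is what produces $\EE L_1$ rather than $\EE[X_1L_1]/\mu$ in the formula. A reader preferring not to use Palm theory as a black box could verify the same identity by hand, by splitting $\xi(B)=\sum_{i\in\ZZ}\sum_j\ind{T_i+T_{ij}\in B}$ into the $i=0$ term (where $T_0=UX^*$ and $W_0=W^*$ are coupled through $\varphi$), the forward part $i\geq 1$ (where $T_i=T_{i-1}+\wid X_i$ with $T_{i-1}$ independent of $W_i$), and the backward part $i\leq-1$ (where $T_i$ is already independent of $W_i$), and summing using the renewal density $1/\mu$ in each part.
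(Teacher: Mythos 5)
Your proof is correct and follows essentially the same route as the paper: both apply the Campbell--Little--Mecke formula to the stationary marked process $\eta$ (with the key observation that the Palm mark distribution is the \emph{unbiased} law of $\wid W_1$, giving intensity $1/\mu$ and marks distributed as $\xi'_1$), and then use Tonelli together with $\int_\RR\ind{t+s\in B}\,\rmd t=Leb(B)$ to reduce the integral to $\frac{1}{\mu}\EE L'_1\,Leb(B)$. The only cosmetic difference is that the paper packages the test function as $f_B(y,m)=m(B-y)$ on $\RR\times M_p(\RR)$, whereas you unfold it into the cluster representation $h(t,w)=\sum_{j\le L(w)}\ind{t+T_j(w)\in B}$; these are the same computation.
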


Application of the Campbell-Little-Mecke formula yields the statement of the proposition. Detailed proof is postponed to Section~\ref{sec:proofs}.

\begin{remark}\label{rmk:exist}
It follows immediately from the previous proposition that a finite mean cluster size is a simple sufficient condition for the existence of a stationary renewal cluster point process. However, this condition is in general not necessary, see \cite{dvj03}, p.\ 177. 
\end{remark}

It is useful in the sequel to introduce the following notation
 \[
    R'_i = \sup_{j \leq L'_i} |T'_{ij}|\,, \quad i\geq0\,.
 \]
Observe that $ R'_i \,, i \geq 1$ are i.i.d.\ nonnegative random variables.
Clearly they represent the distance between the farthest point in a cluster and the corresponding point in the parent process. Recall also that we assumed \(L'_i < \infty\) a.s. We note that without this constraint, one can not expect that the following renewal theorem  holds without additional restrictions. Intuitively, this is because clusters from the past could potentially continue to produce arrivals indefinitely, resulting in an ever-increasing number of arrivals following time \(t\).

\begin{theorem}[Extended renewal theorem for cluster processes]\label{thm:ert}
Assume that  \(\xi'\) is a renewal cluster point process with a stationary version \(\xi\). If \(\EE R'_1\) is finite, then
\begin{align*}
    \theta_t\xi'\dto\xi\,,
\end{align*}
that is
\begin{align*}
    \sum_{i\geq0}\sum_{j=1}^{L'_i}\delta_{T'_i-t+T'_{ij}}\dto\sum_{i\in\mathbb{Z}}\sum_{j=1}^{L_i}\delta_{T_i+T_{ij}}\,
\end{align*}
as \(t\to\infty\) with respect to the vague topology on \(M_p(\RR)\).
\end{theorem}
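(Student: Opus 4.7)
My plan is to deduce Theorem~\ref{thm:ert} from Theorem~\ref{lm0} by viewing the cluster data as a mark, pushing forward by a ``cluster-expansion'' map, and then handling the fact that the push-forward is continuous only up to a truncation error.

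Viewing \(W'_i = (L'_i, (T'_{ij})_{j}, X'_i)\) as an element of the Polish space \(\MM = \NN_0 \times \RR^{\NN} \times \RR_+\), the process \(\eta' = \sum_{i \geq 0} \delta_{(T'_i, W'_i)}\) falls under the hypotheses of Theorem~\ref{lm0}, so \(\theta_t \eta' \dto \eta\) as \(t \to \infty\). For a test function \(f \in C_c^+(\RR)\) with \(\supp f \subset [-K, K]\), set
\[
H_f(s, w) = \sum_{j=1}^{L(w)} f(s + t_j(w)),\quad (s, w) \in \RR \times \MM.
\]
Then \(\int f \rmd (\theta_t \xi') = \int H_f \rmd (\theta_t \eta')\) and \(\int f \rmd \xi = \int H_f \rmd \eta\). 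The function \(H_f\) is continuous, but its support \(\{(s,w) : |s| \leq K + R(w)\}\) is \emph{not} bounded in the boundedness used on \(\RR\times\MM\), since the cluster spread \(R(w) = \sup_{j} |t_j(w)|\) is unbounded on \(\MM\). This is the obstacle to a direct application of the continuous mapping theorem.

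To get around it, introduce a continuous cutoff \(\psi_M : \RR \to [0,1]\) equal to one on \([-M, M]\) and vanishing outside \([-M-1, M+1]\), and set
\[
H_f^M(s, w) = \sum_{j=1}^{L(w)} f(s + t_j)\,\psi_M(t_j).
\]
Then \(H_f^M\) is continuous, and its support is contained in \([-K-M-1,K+M+1]\times\MM\), which is bounded. Hence by Theorem~\ref{lm0} and the continuous mapping theorem, \(\int H_f^M \rmd (\theta_t \eta') \dto \int H_f^M \rmd \eta\) as \(t \to \infty\). On the stationary side, monotone convergence together with the almost-sure finiteness of \(\int f \rmd \xi\) yields \(\int H_f^M \rmd \eta \to \int H_f \rmd \eta\) almost surely as \(M \to \infty\).

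The main remaining step, and the one I expect to be the hardest, is to show that
\[
\int (H_f - H_f^M) \rmd (\theta_t \eta') = \sum_{i \geq 0}\sum_{j=1}^{L'_i} f(T'_i - t + T'_{ij})\bigl(1 - \psi_M(T'_{ij})\bigr)
\]
tends to zero in probability as \(M \to \infty\), uniformly in large \(t\). Since the summand vanishes unless \(|T'_{ij}| > M\) and \(T'_i - t + T'_{ij} \in [-K,K]\), the contributions can come only from clusters with \(|T'_i - t| > M-K\) that nevertheless reach into \([-K,K]\). Using Theorem~\ref{lm0} to pass asymptotically from \(\theta_t \eta'\) to \(\eta\), together with a Campbell-type computation as in the proof of Proposition~\ref{thm:mm of stat pp}, one should obtain a bound of the form \(C\,\EE[R'_1\,\ind{R'_1 > M-K}] + o(1)\), which vanishes as \(M \to \infty\) precisely because of the standing hypothesis \(\EE R'_1 < \infty\). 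A standard \(\varepsilon/3\) argument combining the three limits then gives \(\int f \rmd (\theta_t \xi') \dto \int f \rmd \xi\) for every \(f \in C_c^+(\RR)\), yielding \(\theta_t \xi' \dto \xi\) with respect to the vague topology.
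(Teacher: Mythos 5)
Your overall strategy is the same as the paper's: push the marked-process convergence of Theorem~\ref{lm0} through the ``cluster-expansion'' map, truncate to make the map continuous with bounded support, and then control the truncation tail. The paper truncates on the parent positions via the map \(T_K\) (keeping only clusters with \(|T'_i-t|\le K\)), while you truncate on the cluster offsets \(|T'_{ij}|\le M\); as you observe, these are equivalent up to the shift by \(\supp f\), so this difference is cosmetic.

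The gap is in the step you flag as the hardest, and it is substantive. First, you propose to ``use Theorem~\ref{lm0} to pass asymptotically from \(\theta_t\eta'\) to \(\eta\)'' for the remainder \(\int (H_f-H_f^M)\,\rmd(\theta_t\eta')\). But \(H_f-H_f^M\) is exactly the part whose support is \emph{not} bounded in the boundedness on \(\RR\times\MM\), so continuous mapping does not let you transfer this quantity to the stationary limit; the tail estimate has to be done directly on the non-stationary process \(\theta_t\eta'\) (i.e.\ on the delayed renewal process itself). Second, a Campbell-type expectation bound would produce something of the form \(\|f\|_1\,\EE\big[\sum_{j\le L'_1}\mathbbm 1_{\{|T'_{1j}|>M\}}\big]\le\|f\|_1\,\EE\big[L'_1\,\mathbbm 1_{\{R'_1>M\}}\big]\), which requires \(\EE L'_1<\infty\) (or even \(\EE[L'_1R'_1]<\infty\) depending on how it is organized) to vanish as \(M\toi\). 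Theorem~\ref{thm:ert} assumes only \(\EE R'_1<\infty\), and the paper makes a point of not assuming finite mean cluster size here. The way to close the argument under this weaker hypothesis is to bound the \emph{probability} that the tail term is nonzero, not its expectation: bounding \(\PP(\exists\, i,j\le L'_i: |T'_i-t|>K,\ |T'_i-t+T'_{ij}|<C)\) by a union bound over \(i\) replaces the cluster size \(L'_i\) by the single random variable \(R'_i\), and the resulting terms are handled by the key renewal theorem, yielding quantities such as \(\mu^{-1}\EE(X'_1+R'_1+C-K)_+\) that vanish as \(K\toi\) under \(\EE R'_1<\infty\) alone. If you replace the Campbell step by this probability/union-bound argument, the rest of your outline goes through and matches the paper's proof.
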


\begin{remark}
Let
\[
\cdots \leq S'_{-2} \leq S'_{-1} \leq 0\leq S'_{0}\leq S'_{1}\leq S'_{2}\leq\cdots
\]
denote the points of the renewal cluster point process \(\xi'\) in a nondecreasing order.
Generally, \(S'_k=T'_i+T'_{ij}\), for some \(i,j\).
For an arbitrary \(t>0\), set
\[
\sigma(t)=\inf\{k\in\mathbb{N}:S'_k>t\}\,,
\]
then
\[R(t)=S'_{\sigma(t)}-t\]
denotes the so-called forward recurrence time. In Section \ref{sec:examples}, for a specific cluster point process, we will obtain closed formula for the limiting distribution of the forward recurrence time as a consequence of Theorem \ref{thm:ert}.
\end{remark}

To deduce Blackwell's renewal theorem for renewal cluster point processes from the extended renewal theorem, we need a technical result involving uniform integrability. Recall that the random variables \(X_t\), \(t\in T\), are said to be uniformly integrable if
\[
\lim_{x\toi}\bigg(\sup_{t\in T}\EE\big[|X_t|\mathbbm{1}_{\{|X_t| > x\}}\big]\bigg)=0\,,
\]
where \(T\) is a nonempty index set.

\begin{assumption}[Finite moments assumption]\label{ass:fma}
	Suppose that random variables
	\(L'_0, L'_1, L'_1X'_1\) and \(L'_1R'_1\)  all have finite mean.
\end{assumption}

\begin{lemma}\label{thm:ui}
Let \(\xi'\) be a renewal cluster point process. Suppose that Assumption \ref{ass:fma} holds.
Then for all \(x>0\)
\[
\xi'\langle t,t+x]\,, \quad t\geq0\,, 
\]
are uniformly integrable random variables.
\end{lemma}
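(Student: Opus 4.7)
The plan is to dominate $\xi'(t,t+x]$ pointwise by a more accessible random variable and, for any $\epsilon>0$, decompose this bound as $Y_t^{(1)}+Y_t^{(2)}$ with $\{Y_t^{(1)}\}_t$ uniformly integrable and $\sup_t\EE Y_t^{(2)}<\epsilon$. This, together with the elementary bound $\PP(\xi'(t,t+x]>M)=O(1/M)$ uniformly in $t$, will force $\sup_t\EE[\xi'(t,t+x]\mathbb{I}\{\xi'(t,t+x]>M\}]\to0$ as $M\to\infty$, which is the desired uniform integrability.

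The starting observation is that cluster $\xi'_i$ contributes at most $L'_i$ points to $(t,t+x]$ and contributes nothing unless $[T'_i-R'_i,T'_i+R'_i]$ meets $(t,t+x]$, i.e., unless $T'_i\in[t-R'_i,t+x+R'_i]$. Writing $\mathbb{I}_i:=\mathbb{I}\{T'_i\in[t-R'_i,t+x+R'_i]\}$, this yields the pointwise bound $\xi'(t,t+x]\leq L'_0+\sum_{i\geq1}L'_i\mathbb{I}_i$. For each $i\geq1$ the triple $(L'_i,R'_i,X'_i)$ is independent of $T'_{i-1}=T'_i-X'_i$, so $\sum_{i\geq1}\PP(T'_{i-1}\in\cdot)$ equals the renewal measure $U_0:=\sum_{k\geq0}\PP(T'_k\in\cdot)$; by Stone's decomposition $U_0((a,a+h])\leq C(h+1)$ uniformly in $a\in\RR$, $h>0$. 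Conditioning on $(L'_i,R'_i,X'_i)$ thus gives, for any nonnegative measurable $f$,
\[
\sum_{i\geq1}\EE\bigl[f(L'_i,R'_i)\,\mathbb{I}_i\bigr]\leq C\,\EE\bigl[f(L'_1,R'_1)(x+2R'_1+1)\bigr].
\]

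Given $\epsilon>0$, I would truncate at three levels $K_1,K_2,K_3$ via $L'_0\leq K_1+L'_0\mathbb{I}\{L'_0>K_1\}$, $L'_i\mathbb{I}_i\leq K_2\mathbb{I}_i+L'_i\mathbb{I}\{L'_i>K_2\}\mathbb{I}_i$ and $\mathbb{I}_i\leq\mathbb{I}\{T'_i\in[t-K_3,t+x+K_3]\}+\mathbb{I}\{R'_i>K_3\}\mathbb{I}_i$, arriving at the decomposition $\xi'(t,t+x]\leq Y_t^{(1)}+Y_t^{(2)}$ with
\[
Y_t^{(1)}:=K_1+K_2\,\zeta'\bigl([t-K_3,t+x+K_3]\bigr)
\]
and
\[
Y_t^{(2)}:=L'_0\mathbb{I}\{L'_0>K_1\}+\sum_{i\geq1}L'_i\mathbb{I}\{L'_i>K_2\}\mathbb{I}_i+K_2\sum_{i\geq1}\mathbb{I}\{R'_i>K_3\}\mathbb{I}_i.
\]
Using $L'_0,L'_1,L'_1R'_1\in L^1$ from Assumption~\ref{ass:fma} (whence also $R'_1\leq L'_1R'_1\in L^1$), dominated convergence combined with the displayed estimate lets one choose $K_1,K_2,K_3$ in that order so that $\sup_t\EE Y_t^{(2)}<\epsilon$.

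The main remaining obstacle is uniform integrability of $\{Y_t^{(1)}\}_t$, which reduces to showing UI of the parent-process count $\zeta'([t-K_3,t+x+K_3])$ on a window of fixed length. By the strong Markov property at the first passage time of $[t-K_3,\infty)$, this count is stochastically dominated by $1+N([0,x+2K_3])$ for an independent, undelayed renewal process with increments distributed as $X'_1$; since $\mu>0$ forces $\PP(X'_1>0)>0$, the Laplace transform $\EE e^{-\lambda X'_1}$ is strictly less than $1$ for some $\lambda>0$, and a Chernoff bound $\PP(N([0,h])>n)\leq e^{\lambda h}(\EE e^{-\lambda X'_1})^n$ yields exponential tails for $N$, hence uniformly bounded moments of all orders. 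Consequently $\{Y_t^{(1)}\}_t$ is $L^2$-bounded and UI, and combining with $\sup_t\EE Y_t^{(2)}<\epsilon$ gives $\limsup_{M\to\infty}\sup_t\EE[\xi'(t,t+x]\mathbb{I}\{\xi'(t,t+x]>M\}]\leq\epsilon$; as $\epsilon$ is arbitrary, the limit is zero.
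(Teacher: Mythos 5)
Your proof is correct and takes a genuinely different route from the paper's. The paper first splits $\xi'\langle t,t+x]$ into three pieces according to whether the cluster center $T'_i$ lies before, inside, or after the window, and for each piece (e.g.\ $I'_1(t)$) it establishes convergence in distribution to a limit $I'_1$ (re-running a version of Theorem~\ref{thm:ert} plus a Slutsky-type truncation argument), convergence of means $\EE I'_1(t)\to\EE I'_1<\infty$ (via the key renewal theorem), and then upgrades these two facts to uniform integrability through a Scheff\'e-type lemma (Lemma 5.11 in \cite{kal21}), finally recombining the pieces using results on sums of UI families from \cite{gut05}. Your argument avoids all of the distributional machinery: you dominate $\xi'\langle t,t+x]$ pointwise by $L'_0+\sum_{i\geq1}L'_i\mathbbm{1}_i$, then split by a three-level truncation into a family $\{Y_t^{(1)}\}$ involving only the parent-process count (shown $L^2$-bounded via strong Markov and a Chernoff bound, hence UI) plus a remainder $\{Y_t^{(2)}\}$ whose $L^1$-norm is made uniformly small using the uniform-in-shift bound on the renewal measure and dominated convergence with the moment hypotheses. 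This is more elementary and does not lean on Theorem~\ref{thm:ert}, nor on direct Riemann integrability checks; the trade-off is that the paper's route simultaneously identifies the limit $\EE I'_1$, which is then reused in Lemma~\ref{lem:UxiFin}, whereas your route gives UI without producing such explicit constants.

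Two small points. First, invoking Stone's decomposition is both stronger than needed and not available here in general, since Stone requires a spread-out interarrival law, strictly stronger than the standing nonarithmetic assumption. The bound you need, $U_0((a,a+h])\le C(h+1)$ uniformly in $a$, is elementary: by the strong Markov property at the first passage of $(a,\infty)$ one has $U_0((a,a+h])\le 1+U_p([0,h])$ where $U_p$ is the pure (undelayed) renewal measure, and $h\mapsto U_p([0,h])$ is finite and subadditive up to an additive constant, giving linear growth. Second, the inference $R'_1\le L'_1R'_1\in L^1$ implicitly uses the convention $\sup\emptyset=0$, so that $R'_1=0$ on $\{L'_1=0\}$; this is the natural reading of $R'_i=\sup_{j\le L'_i}|T'_{ij}|$ and consistent with how the paper uses $R'_i$, but it is worth stating explicitly since the inequality fails under other conventions.
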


The proof of the lemma is postponed to Section~\ref{sec:proofs}. Together with Theorem \ref{thm:ert}, the lemma yields the following (we again refer to Section~\ref{sec:proofs} for a proof).

\begin{corollary}[Blackwell's renewal theorem for cluster processes]\label{cor:brt}
Let \(\xi'\) be a renewal cluster point process. Suppose that Assumption \ref{ass:fma} holds, then
\[
\EE \xi' \langle t,t+x] \to \frac{1}{\mu}\mathbb{E}L'_1x\,,
\]
as \(t \to \infty\), for all \(x>0\).
\end{corollary}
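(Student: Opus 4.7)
The plan is to upgrade the distributional convergence of Theorem~\ref{thm:ert} to convergence of means, using the uniform integrability provided by Lemma~\ref{thm:ui}, and to identify the limit via Proposition~\ref{thm:mm of stat pp}. First I would verify that Assumption~\ref{ass:fma} implies $\EE R'_1 < \infty$, since $R'_1 \leq L'_1 R'_1$ almost surely (interpreting the supremum over an empty cluster as $0$). Theorem~\ref{thm:ert} then yields $\theta_t \xi' \dto \xi$ in the vague topology on $M_p(\RR)$.

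Next I would pass from vague convergence to the one-dimensional convergence $\xi'\langle t, t+x] \dto \xi\langle 0, x]$ via the continuous mapping theorem: the evaluation $m \mapsto m\langle 0, x]$ is continuous at any $m \in M_p(\RR)$ such that $m(\{0\}) = m(\{x\}) = 0$. By Proposition~\ref{thm:mm of stat pp} the mean measure of $\xi$ equals $(\EE L_1/\mu)\, Leb$, which is absolutely continuous, and therefore $\EE \xi(\{0\}) = \EE \xi(\{x\}) = 0$. Since $\xi(\{0\})$ and $\xi(\{x\})$ are nonnegative integer-valued, they both vanish almost surely, so the hypothesis of the continuous mapping theorem is satisfied on an event of full probability.

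Finally, Lemma~\ref{thm:ui} gives uniform integrability of $\{\xi'\langle t, t+x] : t \geq 0\}$, and convergence in distribution combined with uniform integrability implies convergence of means. Putting everything together,
\[
\EE\, \xi'\langle t, t+x] \to \EE\, \xi\langle 0, x] = \frac{1}{\mu}\EE L'_1\, x\,,
\]
where the last equality again uses Proposition~\ref{thm:mm of stat pp} together with the observation that clusters indexed by $i \neq 0$ in the stationary construction are distributed as in the original sequence, so $\EE L_1 = \EE L'_1$. The only substantive step is the boundary-atom check in the middle paragraph; the remainder is a routine assembly of the three preceding results.
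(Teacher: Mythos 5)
Your proof is correct and follows exactly the route the paper indicates (the paper merely states that Theorem~\ref{thm:ert} together with Lemma~\ref{thm:ui} yields the corollary and does not spell out the assembly): you pass from the vague convergence $\theta_t\xi'\dto\xi$ to convergence of the one-dimensional marginal $\xi'\langle t,t+x]\dto\xi\langle0,x]$ by a continuous-mapping argument, upgrade to convergence of means via the uniform integrability of Lemma~\ref{thm:ui}, and identify the limit with Proposition~\ref{thm:mm of stat pp}. The details you supply are all sound: $R'_1\leq L'_1R'_1$ a.s.\ (both vanish when $L'_1=0$), so Assumption~\ref{ass:fma} gives $\EE R'_1<\infty$ as needed for Theorem~\ref{thm:ert}; and the boundary-atom check — that $\xi(\{0\})=\xi(\{x\})=0$ a.s.\ because the mean measure is a multiple of Lebesgue measure and $\xi$ is integer-valued — is exactly the right justification for applying the continuous mapping theorem at the evaluation $m\mapsto m\langle0,x]$.
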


In the cluster point process framework, just like in the standard setting, Blackwell's renewal theorem implies the elementary renewal theorem (the proof is simple, see \cite{MDthesis}).

\begin{corollary}[Elementary renewal theorem for cluster processes] \label{thm:elem rnw thm}
Let \(\xi'\) be a renewal cluster point process. Suppose that 
	 Assumption \ref{ass:fma} holds, then
\[
\frac{\EE \xi' \langle 0,t]}{t} 
\to
\frac{1}{\mu}\mathbb{E}L'_1\,,
\]
as \(t \to \infty\).
\end{corollary}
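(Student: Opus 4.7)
The plan is to derive Corollary 3.6 as a direct consequence of Blackwell's renewal theorem (Corollary 3.5), combined with the uniform integrability bound of Lemma 3.4, via the standard Cesàro averaging argument that produces the elementary renewal theorem from the classical Blackwell theorem.

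First I would decompose the interval $\langle 0,t]$ into unit subintervals: setting $n = \lfloor t \rfloor$, write
\[
\EE \xi'\langle 0, t] \;=\; \sum_{k=0}^{n-1} \EE \xi'\langle k, k+1] \;+\; \EE \xi'\langle n, t].
\]
By Corollary 3.5 applied with $x = 1$, the sequence $a_k := \EE \xi'\langle k, k+1]$ converges to $\EE L'_1/\mu$ as $k \to \infty$. The Cesàro lemma for convergent real sequences then yields
\[
\frac{1}{n} \sum_{k=0}^{n-1} a_k \;\longrightarrow\; \frac{\EE L'_1}{\mu}.
\]

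For the boundary term, I would invoke Lemma 3.4: the family $\{\xi'\langle t, t+1]\,:\, t\geq 0\}$ is uniformly integrable, which in particular means it is bounded in $L^1$, so $M := \sup_{t \geq 0} \EE \xi'\langle t, t+1]$ is finite. In particular, each $a_k$ is finite (so the partial sums above are well-defined), and
\[
\EE \xi'\langle n, t] \;\leq\; \EE \xi'\langle n, n+1] \;\leq\; M,
\]
so that $\EE \xi'\langle n, t]/t \to 0$. Since $n/t \to 1$, dividing the decomposition above by $t$ gives
\[
\frac{\EE \xi'\langle 0, t]}{t} \;=\; \frac{n}{t} \cdot \frac{1}{n} \sum_{k=0}^{n-1} a_k \;+\; \frac{\EE \xi'\langle n, t]}{t} \;\longrightarrow\; \frac{\EE L'_1}{\mu},
\]
which is the claim.

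There is really no obstacle to speak of; the deduction is verbatim the classical passage from Blackwell's theorem to the elementary renewal theorem. The only point worth noting is that one needs the initial terms $a_0, a_1, \ldots$ to be finite in order to form the Cesàro averages, and for this the uniform $L^1$ bound supplied by Lemma 3.4 is exactly what is needed; without it one would have to argue directly that $\EE\xi'\langle 0,1]<\infty$ by controlling cluster contributions from points $T'_i$ close to the origin, which would involve precisely the moment conditions in Assumption 3.3.
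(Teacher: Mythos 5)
Your proof is correct and matches the intended argument: the paper itself omits the proof, remarking only that ``Blackwell's renewal theorem implies the elementary renewal theorem (the proof is simple)'' and deferring to the thesis, and your Cesàro-averaging deduction from Corollary~\ref{cor:brt} with the $L^1$-bound from Lemma~\ref{thm:ui} supplying finiteness and control of the boundary term is exactly that standard passage.
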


Finally, it is natural to ask whether equivalence between Blackwell's renewal theorem and the key renewal theorem can be established in the cluster point process setting. The answer is affirmative and it is fairly easy to check that the two versions of renewal theorem are equivalent, as the proof essentially follows the classical argument.
We define the renewal function \(U_{\xi'}:\RR\to[0,\infty]\) by
\[
U_{\xi'}(t)=\EE \Bigg[ \sum_{i=0}^\infty\sum_{j=1}^{L'_i}\mathbbm{1}_{\{T'_i+T'_{ij}\leq t\}}
\Bigg]\,.
\]

\begin{lemma} \label{lem:UxiFin}
Let \(\xi'\) be a renewal cluster point process and suppose that Assumption \ref{ass:fma} holds. Then the renewal function \(U_{\xi'}(t)\) takes finite value for all \(t\in\RR\).
\end{lemma}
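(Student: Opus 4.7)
The plan is to bound the double sum defining $U_{\xi'}(t)$ by a single expectation involving the classical renewal function of the parent sequence, and then exploit its linear growth. The starting observation is that $|T'_{ij}|\leq R'_i$, so $\{T'_i+T'_{ij}\leq t\}\subseteq \{T'_i-R'_i\leq t\}$ and
\[
\sum_{j=1}^{L'_i}\mathbbm{1}_{\{T'_i+T'_{ij}\leq t\}}\leq L'_i\,\mathbbm{1}_{\{T'_i-R'_i\leq t\}}.
\]
The $i=0$ contribution is at most $\EE L'_0<\infty$ by Assumption~\ref{ass:fma}, so the task reduces to bounding $\sum_{i\geq 1}\EE[L'_i\mathbbm{1}_{\{T'_i-R'_i\leq t\}}]$.

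For each $i\geq 1$, the mark $W'_i$ (and hence the triple $(L'_i,R'_i,X'_i)$) is independent of $T'_{i-1}=X'_0+\cdots+X'_{i-1}$. Conditioning on $W'_i$ and writing $T'_i=T'_{i-1}+X'_i$ gives
\[
\EE[L'_i\,\mathbbm{1}_{\{T'_i-R'_i\leq t\}}]=\EE[L'_i\,F_{i-1}(t+R'_i-X'_i)],
\]
where $F_{i-1}$ denotes the cdf of $T'_{i-1}$. Since $(L'_i,R'_i,X'_i)\eind(L'_1,R'_1,X'_1)$ for every $i\geq1$, Tonelli's theorem (all summands are nonnegative) collapses the sum:
\[
\sum_{i\geq 1}\EE[L'_i\,F_{i-1}(t+R'_i-X'_i)]=\EE\bigl[L'_1\,U(t+R'_1-X'_1)\bigr],
\]
where $U(s)=\sum_{j\geq 0}F_j(s)$ is the renewal function of $(T'_i)_{i\geq 0}$.

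To finish, I would invoke the classical fact (a consequence of the elementary renewal theorem, using $\mu>0$) that $U$ is nondecreasing, locally bounded, and satisfies $U(s)\leq C(1+s_+)$ for some finite constant $C$. Combined with $X'_1\geq 0$ and monotonicity of $U$, this yields
\[
\EE[L'_1\,U(t+R'_1-X'_1)]\leq C\,\EE[L'_1(1+(t+R'_1)_+)]\leq C(1+t_+)\EE L'_1+C\,\EE[L'_1 R'_1],
\]
which is finite for every $t\in\RR$ by Assumption~\ref{ass:fma}. The main delicate point is arranging the conditioning so that $T'_{i-1}$ and the cluster data $(L'_i,R'_i,X'_i)$ separate cleanly; after that, the argument is a short bookkeeping exercise combining Tonelli with the linear growth bound on the renewal function.
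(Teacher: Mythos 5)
Your proof is correct and takes a genuinely different route from the paper. Both proofs start with the same bound $\mathbbm{1}_{\{T'_i+T'_{ij}\leq t\}}\leq \mathbbm{1}_{\{T'_i-R'_i\leq t\}}$ and both separate the $i=0$ term, but from there the arguments diverge. The paper further splits the sum over $i\geq 1$ according to $\{T'_i\leq t\}$ versus $\{T'_i>t\}$: the first piece is dominated by $\EE L'_1\,U'(t)$ via $T'_{i-1}\leq t$, and the second piece is shown to converge to a finite constant as $t\toi$ by an application of the key renewal theorem (after a separate direct Riemann integrability check), whence finiteness for large $t$ and then for all $t$ by monotonicity. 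You instead collapse the entire sum in one step to $\EE\bigl[L'_1\,U'(t+R'_1-X'_1)\bigr]$ by conditioning on $(L'_i,R'_i,X'_i)$ and using Tonelli, and then invoke the linear growth bound $U'(s)\leq C(1+s_+)$. Your route is more elementary — it avoids the key renewal theorem and the direct Riemann integrability verification entirely — and it yields an explicit pointwise bound on $U_{\xi'}(t)$, whereas the paper's argument only establishes finiteness. The one place you should spell out a hair more carefully is the linear bound: since $X'_0\geq 0$, the delayed renewal function satisfies $U'(s)\leq U_0(s)$, where $U_0$ is the zero-delay renewal function of the $(X'_i)_{i\geq1}$, and the bound $U_0(s)\leq C(1+s_+)$ follows from local finiteness of $U_0$ together with its subadditivity (or from the elementary renewal theorem plus monotonicity); this requires only $\mu>0$, which is in force. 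After that your bookkeeping, using $R'_1\geq 0$ and $X'_1\geq 0$ to reduce to $\EE L'_1$ and $\EE[L'_1R'_1]$, is exactly right.
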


The  lemma is proved in Section~\ref{sec:proofs}. Together with the dominated convergence theorem, it yields the right-continuity of the renewal function \(U_{\xi'}\). As it is obviously nondecreasing, we can associate with it a unique (renewal) measure \(\mu_{U_{\xi'}}\) on \(\RR\) such that \(\mu_{U_{\xi'}}\langle-\infty,t]=U_{\xi'}(t)\), for all \(t\). 
It is common in the literature to use the same notation for the renewal function and this measure, since the context prevents the confusion. Hence, in the sequel we will denote both objects by \(U_{\xi'}\).

\begin{corollary}[Key renewal theorem for cluster processes]
	\label{cor:KRT}
Let \(\xi'\) be a renewal cluster point process. Suppose that Assumption \ref{ass:fma} holds and that \(g:\mathbb{R_+}\to\mathbb{R_+}\) is a directly Riemann integrable function. Then
	\[
	\lim_{t\to\infty}\int_0^t g(t-y)\rmd U_{\xi'}(y)
	=\frac{1}{\mu}\EE L'_1 \int_{0}^\infty g(y) \rmd y\,.
	\]
\end{corollary}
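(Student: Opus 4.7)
The plan is to follow the classical derivation of the key renewal theorem from Blackwell's theorem, the only new ingredient being that the ordinary renewal measure is replaced by \(U_{\xi'}\). Three facts enable the standard argument to go through: Corollary~\ref{cor:brt} gives \(U_{\xi'}(t+h)-U_{\xi'}(t)\to h\EE L'_1/\mu\) as \(t\toi\) for every \(h>0\); Lemma~\ref{lem:UxiFin} guarantees \(U_{\xi'}(t)<\infty\) for all \(t\); and \(U_{\xi'}\) is nondecreasing.

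First I would verify the conclusion on the dense class of finite linear combinations of indicators of half-open intervals. For \(g=\mathbbm{1}_{(a,b]}\), one has \(\int_0^t g(t-y)\rmd U_{\xi'}(y)=U_{\xi'}(t-a)-U_{\xi'}(t-b)\) once \(t>b\), which tends to \((b-a)\EE L'_1/\mu\) by Corollary~\ref{cor:brt}, matching the right-hand side of the claim; linearity handles finite sums. For a general directly Riemann integrable \(g\), I would fix \(h>0\) and introduce the upper and lower step-function envelopes
\[
\overline g_h=\sum_{k\geq 1}M_k\mathbbm{1}_{((k-1)h,kh]}\,,\qquad \underline g_h=\sum_{k\geq 1}m_k\mathbbm{1}_{((k-1)h,kh]}\,,
\]
where \(M_k=\sup g\) and \(m_k=\inf g\) on \(((k-1)h,kh]\). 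By the definition of direct Riemann integrability the sums \(h\sum_k M_k\) and \(h\sum_k m_k\) are finite for each \(h>0\) and both converge to \(\int_0^\infty g\) as \(h\downarrow 0\).

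For the upper envelope,
\[
\int_0^t \overline g_h(t-y)\rmd U_{\xi'}(y)=\sum_{k\geq 1}M_k\bigl(U_{\xi'}(t-(k-1)h)-U_{\xi'}(t-kh)\bigr)\mathbbm{1}_{\{kh\leq t\}}\,,
\]
and each summand converges to \(M_k\cdot h\EE L'_1/\mu\) by Blackwell. To interchange limit and sum I would apply dominated convergence using the uniform bound \(C_h:=\sup_{s\geq 0}\bigl(U_{\xi'}(s+h)-U_{\xi'}(s)\bigr)<\infty\); this bound is the main technical point and follows by choosing \(T\) via Blackwell so that \(U_{\xi'}(s+h)-U_{\xi'}(s)\leq 2h\EE L'_1/\mu\) for all \(s\geq T\), while for \(s\in[0,T]\) one controls the increment by \(U_{\xi'}(T+h)\), finite by Lemma~\ref{lem:UxiFin}. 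The dominating sequence \(C_hM_k\) is summable since \(\sum_k M_k<\infty\), so \(\lim_{t\toi}\int_0^t \overline g_h(t-y)\rmd U_{\xi'}(y)=(\EE L'_1/\mu)\,h\sum_k M_k\). Repeating this for \(\underline g_h\) and using the sandwich \(\underline g_h\leq g\leq\overline g_h\) pins down the limsup and liminf of the integral against \(g\); letting \(h\downarrow 0\) then yields the desired value \((\EE L'_1/\mu)\int_0^\infty g\). The only nonroutine ingredient is the uniform bound \(C_h<\infty\); everything else is the textbook sandwich argument.
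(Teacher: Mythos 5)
Your proof is correct and follows the same overall sandwich strategy as the paper: verify on step functions, then bracket a general directly Riemann integrable $g$ between upper and lower step envelopes and let the mesh $h\downarrow 0$. The genuine difference lies in how you establish the crucial uniform bound that licenses dominated convergence in the step-function stage. The paper first derives a convolution-type renewal equation
\[
U_{\xi'}(t)=(1-F)*\phi(t)+G*\psi(t)+F*U_{\xi'}(t)\,,
\]
from which it extracts the explicit bound
$\sup_{t,k}U_{\xi'}\langle t-kh,\,t-(k-1)h]\leq(\EE L'_0+\EE L'_1)/(1-F(h))$,
at the cost of needing $h$ small enough that $F(h)<1$. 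You instead bound
$C_h=\sup_{s\geq 0}\bigl(U_{\xi'}(s+h)-U_{\xi'}(s)\bigr)$
by splitting into $s\geq T$ (controlled directly by Corollary~\ref{cor:brt}) and $s\in[0,T]$ (controlled by $U_{\xi'}(T+h)$, finite by Lemma~\ref{lem:UxiFin}). This is a perfectly valid and arguably more elementary route: it reuses the already-established Blackwell-type convergence rather than setting up and manipulating the convolution equation, and it dispenses with the $F(h)<1$ restriction. What it gives up is the clean closed-form bound the paper obtains, which the paper finds convenient since it makes the dependence on $h$ explicit.

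One small imprecision worth flagging: your displayed identity
\[
\int_0^t \overline g_h(t-y)\,\rmd U_{\xi'}(y)=\sum_{k\geq 1}M_k\bigl(U_{\xi'}(t-(k-1)h)-U_{\xi'}(t-kh)\bigr)\mathbbm{1}_{\{kh\leq t\}}
\]
omits the single boundary term coming from the index $k$ with $(k-1)h\leq t<kh$, namely a contribution of the form $M_k\,U_{\xi'}\langle 0, t-(k-1)h]$. The paper writes this term out explicitly and then observes it is bounded by $M_k\,U_{\xi'}(h)$ and hence vanishes as $t\toi$ because $M_k\to 0$. The omission does not affect the limit, but the equality as written is not exact, and a referee would want the boundary term acknowledged and discharged.
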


The proof of the corollary can be found in Section~\ref{sec:proofs}. It shows that Blackwell's renewal theorem implies the key renewal theorem under Assumption \ref{ass:fma}. The reverse implication is of course straightforward if one sets \(g=\mathbbm{1}_{[0,x\rangle}\), for any fixed  \(x\geq0\).

\subsection{Examples}\label{sec:examples}

For an illustration we consider two processes. In the first example, we study a well known Poisson cluster process, and in the second, a renewal cluster point process where both the number and position of points within the cluster depend on the last interarrival time.


\begin{example}[Bartlett-Lewis process]
Suppose that \(T'_i, i\geq1\) are the points of a homogeneous Poisson process on \(\mathbb{R}_+\) with intensity \(\lambda\) and \(Y'_{ij}, {i\geq1,j\geq1}\) are i.i.d.\ nonnegative random variables with distribution \(F\) independent of \(T'_i\), for all \(i\). Let \(L'_i, {i\geq1}\) be i.i.d.\ nonnegative integer-valued random variables independent of \(T'_i\) and \(Y'_{ij}\), for all \(i,j\). Define
\(T'_{ik}=\sum_{j=1}^{k}Y'_{ij}\) and set \(T'_{i0}=0\), for every \(i\geq1\).
Bartlett-Lewis point process is given by
\[
\sum_{i\geq1}\sum_{j=0}^{L'_i}\delta_{T'_i+T'_{ij}}\,.
\]
Let \(x>0\). Note that the points of the parent process are now included in the cluster point process.
Hence, a simple modification of Corollary \ref{cor:brt} yields
\begin{equation}\label{4-eq:mean bl}
\lim_{t\to\infty} \mathbb{E} \Bigg[ \sum_{i\geq1} \sum_{j=0}^{L'_i} \delta_{T'_i+T'_{ij}} \Bigg] \langle t,t+x]
=\frac{1}{\mu} \EE [L'_1+1] x
=\lambda \mathbb{E}[L'_1+1] x\,,
\end{equation}
under the Assumption \ref{ass:fma}.
Furthermore, if \(L'_1\)  and \(Y'_{11}\) are integrable, then by Theorem \ref{thm:ert} and Theorem 4.11 in \cite{kal17}
\begin{gather*}
    \lim_{t\to+\infty}
    \mathbb{P} \Bigg( \sum_{i\geq1} \sum_{j=0}^{L'_i} \delta_{T'_i+T'_{ij}} \langle t,t+x]=0 \Bigg)
    =\mathbb{P} \Bigg( \sum_{i\in\mathbb{Z}} \sum_{j=0}^{L_i} \delta_{T_i+T_{ij}} \langle 0,x]=0 \Bigg)\\
    =\exp \bigg\{-\lambda \bigg(x+\mathbb{E}{L'_1} \int_0^x \mathbb{P}(Y'_{11}>y) \rmd y \bigg)\bigg\}\,,
    \numberthis\label{4-eq:R bl}
\end{gather*}
where the proof of the last equality can be found in Section 5 of \cite{fay06}, see also Example 6.3 (b) in \cite{dvj03}. Hence,
\[
\lim_{t\to+\infty}\mathbb{P}(R(t)\leq x)
=1-\exp\bigg\{-\lambda \bigg( x+\mathbb{E}{L'_1} \int_0^x \mathbb{P} (Y'_{11}>y) \rmd y \bigg)\bigg\}\,.
\]
\end{example}

Equations \eqref{4-eq:mean bl} and \eqref{4-eq:R bl}, which we obtained as a consequence of the Theorem \ref{thm:ert} and Corollary \ref{cor:brt}, can be found in \cite{lew69}, Theorem 2.3 and \cite{lew64}, equation (4.3.5).

\begin{example}
Suppose that \((T'_i)_{i\geq0}\) is a pure renewal process with interarrivals distributed uniformly on \(\langle0,5\rangle\). Let \(L'_0\equiv0\) and suppose that both \(L'_i\) and \(T'_{ij}\) depend on the last interarrival \(X'_i\), for all \(i\geq1\). Precisely, let \(L'_{i1}\) and \(L'_{i2}\) be i.i.d.\ Poisson random variables with parameters \(0.5\) and \(5\), respectively and let \(Y'_{ij}\) be i.i.d.\ random variables with standard normal distribution, for all \(i,j\geq1\). Then, we set
\[
L'_i=L'_{i1}\mathbbm{1}_{\{X'_i>1\}}+L'_{i2}\mathbbm{1}_{\{X'_i\leq1\}}
\quad\text{ and }\quad
T'_{ij}=X'_i+Y'_{ij}\,,
\]
for all \(i,j\geq1\).
Renewal cluster point process is given by
\[
\xi'=\sum_{i\geq0}\sum_{j=1}^{L'_i}\delta_{T'_i+T'_{ij}}\,.
\]
By direct calculation one easily checks the conditions of Corollaries \ref{cor:brt} and \ref{thm:elem rnw thm}, hence
\[
\lim_{t\toi}\EE\xi'\langle t, t+x]
=\frac{1}{\EE X'_1} \EE L'_1x
=0.4\cdot1.4x
=0.56 x\,,\quad \text{ for all } x>0
\]
and
\[
\lim_{t\toi}\frac{\EE\xi'\langle 0, t]}{t}
=\frac{1}{\EE X'_1} \EE L'_1
=0.56\,.
\]
For Monte Carlo simulations supporting these results, we refer to \cite{MDthesis}.
\end{example}

\section{Proofs}\label{sec:proofs}

\subsection{Proof of Theorem~\ref{lm0}}

Recall a random variable \(\vartheta\) is said to be Rademacher if 
\(\PP(\vartheta = 1) = \PP(\vartheta = -1)  = 1/2\).
A sequence of i.i.d. Rademacher variables is called a Rademacher sequence. The following lemma seems to be known, a rigorous proof can be found in \cite{MDthesis}.
\begin{lemma}\label{lm:1}
	Suppose that $\beta$ is a random element in some Polish space $\MM$ independent of a Rademacher sequence $(\vartheta_k)_{k \in \NN}$ and let $\mathcal{F}_n = \sigma\{ \beta, \vartheta_1,  \ldots ,\vartheta _n \}$. Assume that $\tau$ is $\{\mathcal{F}_n\}$--stopping time, then the sequence
	\[
	(\beta , \vartheta_1,  \ldots ,\vartheta _\tau,  - \vartheta _{\tau+1},- \vartheta _{\tau+2},  \ldots )
	\]
	has the same distribution as $ (\beta , \vartheta_1,\vartheta _2, \vartheta _3, \ldots )$\,.
\end{lemma}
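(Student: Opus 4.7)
The plan is to reduce the stated equality in distribution to a finite-dimensional cylinder verification, and then to combine the $\{\mathcal{F}_n\}$-adaptedness of $\tau$ with the symmetry of the Rademacher law. Since the product Borel $\sigma$-algebra on $\MM\times\{-1,1\}^\NN$ is generated by finite-dimensional cylinders, it suffices to prove, for every $N\in\NN$ and every bounded measurable $F$ on $\MM\times\{-1,1\}^N$, that
\[
\EE F\bigl(\beta,\vartheta_1,\ldots,\vartheta_{\tau\wedge N},-\vartheta_{\tau\wedge N+1},\ldots,-\vartheta_N\bigr)
=\EE F(\beta,\vartheta_1,\ldots,\vartheta_N),
\]
with the convention that on $\{\tau\geq N\}$ the left-hand side is read simply as $F(\beta,\vartheta_1,\ldots,\vartheta_N)$.

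First I would decompose the left-hand side according to the value of $\tau$:
\[
\sum_{n=0}^{N-1}\EE\bigl[F(\beta,\vartheta_1,\ldots,\vartheta_n,-\vartheta_{n+1},\ldots,-\vartheta_N)\,\mathbbm{1}_{\{\tau=n\}}\bigr]
+\EE\bigl[F(\beta,\vartheta_1,\ldots,\vartheta_N)\,\mathbbm{1}_{\{\tau\geq N\}}\bigr].
\]
For each $n<N$, the indicator $\mathbbm{1}_{\{\tau=n\}}$ and the vector $(\beta,\vartheta_1,\ldots,\vartheta_n)$ are $\mathcal{F}_n$-measurable, hence independent of the tail $(\vartheta_{n+1},\ldots,\vartheta_N)$. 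Since the Rademacher variables are symmetric and independent,
\[
(-\vartheta_{n+1},\ldots,-\vartheta_N)\eind(\vartheta_{n+1},\ldots,\vartheta_N),
\]
and this identity survives conditioning on $\mathcal{F}_n$. Conditioning on $\mathcal{F}_n$, swapping the reflected tail for an unreflected copy, and integrating back then shows that each summand equals $\EE[F(\beta,\vartheta_1,\ldots,\vartheta_N)\,\mathbbm{1}_{\{\tau=n\}}]$. Adding the $\{\tau\geq N\}$ contribution recovers $\EE F(\beta,\vartheta_1,\ldots,\vartheta_N)$, which is what was to be shown.

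The main obstacle is bookkeeping rather than any deep probabilistic fact: the symmetry must be applied \emph{on the event} $\{\tau=n\}$ without disturbing the stopping-time structure, and this is precisely what the conditioning on $\mathcal{F}_n$ accomplishes. A secondary technicality is passing from equality of finite-dimensional laws to equality of the infinite sequences, which is justified by the fact that cylinder sets generate the product Borel $\sigma$-algebra on $\MM\times\{-1,1\}^\NN$. The case $\{\tau=\infty\}$ needs no separate treatment, since on that event no coordinate is ever flipped and the identity holds pointwise.
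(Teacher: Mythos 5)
Your proof is correct. The paper itself does not supply a proof of this lemma; it merely remarks that the result ``seems to be known'' and cites the thesis \cite{MDthesis} for a rigorous argument, so there is no in-paper proof to compare against. Your argument is the standard reflection-at-a-stopping-time argument: decompose over $\{\tau=n\}$, note that $\mathbbm{1}_{\{\tau=n\}}$ and $(\beta,\vartheta_1,\ldots,\vartheta_n)$ are $\mathcal{F}_n$-measurable and hence independent of the i.i.d.\ tail $(\vartheta_{n+1},\ldots,\vartheta_N)$, and then exploit the sign-symmetry of that tail under conditioning on $\mathcal{F}_n$. The reduction to finite-dimensional marginals is justified because $\MM$ is Polish, so the Borel $\sigma$-algebra on $\MM\times\{-1,1\}^\NN$ is generated by the $\pi$-system of cylinder sets, and you handle the event $\{\tau\geq N\}$ (including $\tau=\infty$) cleanly by observing that no flip occurs among the first $N$ coordinates there. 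No gaps.
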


\begin{lemma}\label{4-lem inc}
Let \(\MM\) be an arbitrary Polish space. Assume that for an arbitrary nonnegative continuous real-valued function \(f\) on \(\MM\) and a sequence \((t_0^{(n)},(w_i^{(n)})_{i\geq0})_{n\geq1}\) in \(\RR_+\times\MM^{\NN_0}\) the following conditions hold:
\begin{enumerate}[(i)]
    \item \(
    t_0^{(n)}\to t_0\text { in } \RR_+ \text{ and } w_i^{(n)}\to w_i\text{ in } \MM \,,\text{ for all }i\geq0\,,\text{ as }n\toi
    \)\,,
    \item \(
    \sum_{j=1}^i f(w_j^{(n)})\toi \text{ and }
  \sum_{j=1}^i f(w_j)\toi\,, \text{ for all }n\in\NN\,,\text{ as } i\toi
  \)\,.
\end{enumerate}
\noindent Then
\[
\sum_{i=0}^\infty\delta_{(t_0^{(n)}+\sum_{j=1}^i f(w_j^{(n)}),w_i^{(n)})}
\vto
\sum_{i=0}^\infty\delta_{(t_0+\sum_{j=1}^i f(w_j),w_i)}\,,
\]
as \(n\toi\).
\end{lemma}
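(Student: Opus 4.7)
The conclusion is vague convergence on $M_p(\RR\times\MM)$, so I would verify the definition recalled at the start of Section~2 directly: for every continuous bounded real-valued function $g$ on $\RR\times\MM$ with $\supp g\subseteq [-h,h]\times\MM$ (for some $h>0$), I need to show
\[
S_n \;:=\; \sum_{i=0}^\infty g\bigl(t_0^{(n)} + \sigma_i^{(n)},\, w_i^{(n)}\bigr) \;\longrightarrow\; S \;:=\; \sum_{i=0}^\infty g\bigl(t_0 + \sigma_i,\, w_i\bigr),
\]
where $\sigma_i^{(n)}:=\sum_{j=1}^i f(w_j^{(n)})$, $\sigma_i:=\sum_{j=1}^i f(w_j)$, and $\sigma_0^{(n)}=\sigma_0=0$.

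First I would record pointwise convergence of the arrival times: continuity of $f$ together with hypothesis~(i) yields $f(w_j^{(n)})\to f(w_j)$ for each $j$, and hence $\sigma_i^{(n)}\to \sigma_i$ and $t_0^{(n)}+\sigma_i^{(n)}\to t_0+\sigma_i$ for every fixed $i\geq 0$. Next I would reduce both $S_n$ and $S$ to finite sums with a truncation index that does not depend on~$n$. Hypothesis~(ii) combined with $f\geq 0$ makes the sequences $(\sigma_i)_i$ and $(\sigma_i^{(n)})_i$ nondecreasing, with $\sigma_i\to\infty$; so I can choose $M$ with $t_0+\sigma_M>h$. By the previous step $t_0^{(n)}+\sigma_M^{(n)}\to t_0+\sigma_M>h$, so there is $n_0$ with $t_0^{(n)}+\sigma_M^{(n)}>h$ for all $n\geq n_0$. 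Monotonicity in $i$ then propagates this bound to $t_0^{(n)}+\sigma_i^{(n)}>h$ for every $i\geq M$ and $n\geq n_0$, and because $\supp g\subseteq[-h,h]\times\MM$ these indices contribute nothing to $S_n$; the same bound (without any restriction on $n$) takes care of the tail $i\geq M$ of $S$.

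Finally, for each $i<M$ continuity of $g$ together with $(t_0^{(n)}+\sigma_i^{(n)},w_i^{(n)})\to(t_0+\sigma_i,w_i)$ gives $g(t_0^{(n)}+\sigma_i^{(n)},w_i^{(n)})\to g(t_0+\sigma_i,w_i)$. Summing over the finitely many indices $0\leq i<M$ concludes the argument.

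The only genuinely delicate point --- and the only place where the hypotheses play a nontrivial role --- is producing the truncation index $M$ uniformly in $n$. The nonnegativity of $f$, which makes the partial sums $\sigma_i^{(n)}$ monotone in $i$, is precisely what makes this work: without it, the inequality $t_0^{(n)}+\sigma_M^{(n)}>h$ could not be propagated to every $i>M$, as isolated large excursions of the partial sums could bring the sequence back inside the support of $g$.
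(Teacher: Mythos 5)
Your proof is correct. It rests on the same two observations the paper uses --- nonnegativity of $f$ makes the partial sums $\sigma_i^{(n)}$ nondecreasing in $i$, while hypothesis~(ii) makes them divergent, so only finitely many atoms of either measure can land in any strip $[-h,h]\times\MM$, and by~(i) and continuity of $f$ those atoms converge coordinatewise --- but the final step differs. The paper restricts to strips $[0,h]\times\MM$ whose boundary carries no $m$-mass, exhibits for large $n$ a common labeling of the finitely many atoms in that strip converging pairwise in $\RR\times\MM$, and then concludes by Proposition~2.8 of \cite{bp19} (a matching-of-points criterion for vague convergence). You instead verify the definition of vague convergence directly: for any continuous bounded $g$ with $\supp g\subseteq[-h,h]\times\MM$, you pick a truncation index $M$ uniformly in $n$ via the monotone divergence (which, as you rightly flag, is where the hypotheses do their work), discard the tail because those atoms fall outside $\supp g$, and pass to the limit in the remaining finite sum by continuity of $g$. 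Your route avoids both the appeal to an external criterion and the need to restrict to $h$ with $m(\partial([0,h]\times\MM))=0$; the paper's route makes explicit the structural fact that $m_n$ and $m$ eventually have the same number of atoms in each continuity strip, with matching locations. Both are sound.
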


\begin{proof}
Denote
\[
m_n=\sum_{i=0}^\infty\delta_{(t_0^{(n)}+\sum_{j=1}^i f(w_j^{(n)}),w_i^{(n)})}\,,\text{ for all }n\in\NN \quad\text{ and }\quad 
m=\sum_{i=0}^\infty\delta_{(t_0+\sum_{j=1}^i f(w_j),w_i)}\,.
\]
From assumption (ii),
it follows that integer-valued measures \(m_n\) and \(m\) are locally finite on \(\RR\times\MM\). 
Furthermore, we conclude from (i) and (ii)
that for an arbitrary \(h>0\) such that \(m(\partial([0,h]\times\MM))=0\), there exist \(k_0, n_0\in \NN\) such that for all \(n\geq n_0\)
\[
m_n\bigg|_{[0,h]\times\MM}=\sum_{i=0}^{k_0}\delta_{(t_0^{(n)}+\sum_{j=1}^i f(w_j^{(n)}),w_i^{(n)})}
\quad\text{ and }\quad
m\bigg|_{[0,h]\times\MM}=\sum_{i=0}^{k_0}\delta_{(t_0+\sum_{j=1}^i f(w_j),w_i)}\,.
\]
Moreover, by (i)
\[
t_0^{(n)}+\sum_{j=1}^i f(w_j^{(n)})\to t_0+\sum_{j=1}^i f(w_j) \text{ in } \RR_+
\quad\text{ and }\quad
w_i^{(n)}\to w_i\text{ in } \MM\,,
\]
for all \(i=0,1,\dots,k_0\), since \( f\) is continuous.
Thus, by Proposition 2.8 in \cite{bp19}
\[
m_n\vto m\,,
\]
as \(n\toi\).
\end{proof}

\begin{proof} [Proof of Theorem~\ref{lm0}]
	Like several other proofs of different types of renewal theorem, we base our argument  on the coupling method. In the initial steps, it follows a coupling of the type used by \cite{tho00} in sections 2.7 and 2.8 and \cite{kal17} in the proof of Theorem 12.7.

	Consider an i.i.d.\ sequence  $(\wid W_i, \wid X_i)_{i \geq1}$ with the same distribution as $(W_i,X_i)_{i\geq1}$ and an independent Rademacher sequence $(\vartheta_i)_{i\geq1}$.  
	
	\noindent
	\textit{Step 1.} We first  construct copies of $\eta$ and $\eta'$ on the same probability space. 
	Assume that $(W_0,X_0)$ and $(W'_0,X'_0)$ have the desired distribution, but are independent of the sequences $(\wid W_i, \wid X_i)_{i\geq1}$ and $(\vartheta_i)_{i\geq1}$. Set $K_0=K_0''=0$ and then for $i \geq 1$
	\[
	K_i = \inf \{ k> K_{i-1} \,:\, \vartheta_k = 1 \}
	\quad \mbox{ and }\quad
	K''_i = \inf \{ k> K''_{i-1} \,:\, \vartheta_k = -1 \} \,.
	\]
	Alternatively, one can view $K_i$ and $K''_i$ as the indices of the $i$-th $1$ and $-1$, respectively, in the sequence $(\vartheta_i)_{i\geq1}$.
	Let	
	$T_0= X_0$, $T''_0 = X'_0$ and \(W''_0=W'_0\). Then, for $i \geq 1$ let
	\begin{align}
		T_i &=  T_0 + \sum_{j=1}^{K_i} \wid X_j \mathbbm{1}_{\{\vartheta_j=1\}} 
		= T_0 + \sum_{j=1}^{i} \wid X_{K_j} = : T_{i-1}+Y_i \,, 
		\quad \mbox{ and } \quad  W_i = \wid W_{K_i}\,,\label{eq:TpY}   \\ 
		T''_i &= T''_0 + \sum_{j=1}^{K''_i} \wid X_j \mathbbm{1}_{\{\vartheta_j=-1 \}}
		= T''_0 + \sum_{j=1}^{i} \wid X_{K''_j}\,,
		\quad \mbox{ and } \quad   W''_i = \wid W_{K''_i}\,. \label{eq:Tdvoc}
	\end{align}
	Note that $Y_i  = \wid X_{K_i}$ in \eqref{eq:TpY} are i.i.d., for all \(i\geq1\).
	As explained above, one can extend the sequence $( T_i,  W_i)_{i\geq 0}$ to a sequence indexed over $\ZZ$ so that the point process
	$\eta = \sum_{i \in \ZZ} \delta_{ (T_i,  W_i)}$ becomes stationary.
	It is immediate then that $\eta $ and 
	\[
	\eta''  =  \sum_{i\geq0} \delta_{ (T''_i,  W''_i)}\,.
	\]
	have the same distribution as the $\eta$ and $\eta'$ in the statement of the theorem. 
	
	\noindent
	\textit{Step 2.} Set 
	\[
	L_i = | \{  k \leq i : \vartheta_k = 1 \}|
	\quad \text{ and } \quad
	L'_i = i - L_i\,,
	\quad i\geq0\,,
	\]
	where \(\vert\cdot\vert\) denotes cardinality of a set.
	By \eqref{eq:TpY} and \eqref{eq:Tdvoc}, the difference between the arrival times $T_{L_i}$ and $T''_{L'_i}$
	equals 
	\[
	V_i = T_0 - T''_0 + \sum_{j=1}^i \vartheta_j  \wid X_j\,,\quad i \geq 0\,.
	\]
	In particular $(V_i)_{i\geq0}$ is a random walk with nonarithmetic and symmetric steps. By the Chung-Fuchs theorem, see Theorem 4 in \cite{cf51}, \((V_i)_{i\geq0}\) is recurrent and for all \(x\in\RR\) and \(\varepsilon>0\)
	\[
	\PP(|V_i-x|<\varepsilon \text{ i.o.})=1\,.
	\]
	Therefore, for any $\varepsilon>0$, with probability 1
	\[
	\tau = \inf \{i : V_i \in [0,\varepsilon\rangle \} < \infty \,.
	\]
	
	\noindent
	\textit{Step 3.} 
	Let now
	\begin{align*}
		T'_i &= T''_i \quad \text{ and } \quad W_i' = W_i''\,, \hspace{5.4cm}\quad \mbox{ if } i \leq L'_\tau\ \\
		T'_i &= T'_{i-1} + Y_{ k+ L_\tau } = T'_{i-1} +\wid X_{K_{ k+ L_\tau} }  \quad \text{ and } \quad   W_i' = W_{k+L_\tau}\,, \quad \mbox{ if } i= L'_\tau+k\,, k>0\,.
	\end{align*}
	Then in particular, for any $k\geq 0$
	\[
	T_{k+L_\tau}- T'_{k+L'_\tau} \in [0, \varepsilon\rangle \quad \mbox{ and } \quad  W_{k+L_\tau} = W'_{k+L'_\tau}\,.
	\]
	That is, after coming $\varepsilon$--close at times $L_\tau$ and $L'_\tau$, the marked point processes $\eta$ and
	\[ 
	\eta'  =  \sum_{i\geq0} \delta_{ (T'_i,  W'_i)}\,
	\]
	stay  $\varepsilon$--close in time with exactly the same marks indefinitely.
	Denote now by $(\wid \vartheta_i)_{i\geq1}$  Rademacher sequence
	\[
	\vartheta_1,  \vartheta_2,  \vartheta_3,  \ldots,  \vartheta_\tau, -  \vartheta_{\tau+1},
	- \vartheta_{\tau+2},- \vartheta_{\tau+3},\ldots
	\]
	Applying Lemma~\ref{lm:1} to the sequence $\beta =(\wid W_i, \wid X_i)_{i\geq1}$, the sequence $(\vartheta_i)_{i\geq1}$ and stopping time $\tau$, we see that \((\beta,\wid\vartheta_1,\wid\vartheta_2,\dots)\) has  the same distribution as \((\beta,\vartheta_1,\vartheta_2,\dots)\). Let $K'_0 =0$ and $K'_i = \inf \{ k> K'_{i-1} \,:\, \wid \vartheta_k = -1 \}$, \(i\geq1\).
	Since
	\begin{align*}
		T'_i &= T'_0 + \sum_{j=1}^{K'_i} \wid X_j \mathbbm{1}_{\{\wid \vartheta_j=-1\}}
		\quad\mbox{ and }\quad  
		W'_i = \wid W_{K'_i}\,,
	\end{align*}
	a quick comparison with \eqref{eq:Tdvoc} shows that 
	$ \eta'$ has the same distribution as in the statement of the theorem.
	
	\noindent
	\textit{Step 4.} 
	Observe that $V_{\varepsilon} := T_{L_\tau} \vee T''_{L'_\tau} < \infty $ a.s.
	Denote for $t >0$ 
	\[
	\sigma(t) = \inf \{ k \geq  0: T_k > t\} \quad \mbox{ and } \quad
	\sigma'(t) = \inf \{ k \geq 0  : T'_k > t\}\,.
	\]
	For arbitrary $u \geq 0$ and a measurable set $A_0 \subseteq \MM$
	we have
	\begin{align}
			&\PP \big(  T_{\sigma(t)} -t > u+\varepsilon,\,  W_{\sigma(t)}\in A_0 
			\big)
			-
			\PP(V_\varepsilon > t) \nonumber\\
		& \hspace{0.5cm} \leq \PP \big(  T'_{\sigma'(t)} -t > u,\,  W'_{\sigma'(t)}\in A_0 
		\big) \label{eq:PVc} \\ 
		& \hspace{1cm}\leq 
		\PP \big(  T_{\sigma(t)} -t \in [0, \varepsilon\rangle \big) +
		\PP \big(  T_{\sigma(t)} -t > u,\,  W_{\sigma(t)}\in A_0
		\big)
		+
		\PP(V_{\varepsilon} > t)\,. \nonumber	
	\end{align}
	Clearly, $\PP(V_{\varepsilon} > t) \to 0$ as $t \toi$. Moreover, by the stationarity of the process $\eta$ 
	\[
	\PP \big(  T_{\sigma(t)} -t > u,\,  W_{\sigma(t)}\in A_0
	\big) =
	\PP \big(  T_0 > u,\,  W_{0}\in A_0
	\big)\,.
	\]
	Observe further that the function $u \mapsto \PP (  T_0 \leq u,\,  W_{0}\in A_0
	)$ is continuous, for any choice of $A_0. $ 
	Therefore, letting first $t \toi$ and then $\varepsilon \to 0$ on the left and right hand side of \eqref{eq:PVc},
	yields 
	\begin{equation}\label{eq:TWconv}
		\Big(T'_{\sigma'(t)} -t ,  W'_{\sigma'(t)}\Big)
		\dto  (T_{0} , W_{0} )\,,
	\end{equation}
	as \(t\toi\).
	On the other hand, observe that
	\begin{equation}\label{eq:WstrongMark}
		\Big( W'_{\sigma'(t)+i} \Big)_{i \geq 1} \eind (W_i)_{i\geq 1}
	\end{equation}
	with the left hand side independent of $\Big(T'_{\sigma'(t)} -t,  W'_{\sigma'(t)}\Big)$ and the right hand side independent of \((T_0,W_0)\).
	
	\noindent
	\textit{Step 5.} Because $\RR\times \mathbb{M}$ is Polish space, there exists a  probability space where one can find random elements
	$(\hat T'_{\sigma'(t)} -t,  \hat W_{\sigma'(t)} , (\hat W_{\sigma'(t)+i})_{i\geq 1} ),\  t >0$,
	and $ (\hat T_{0}, \hat W_{0}, (\hat W_i)_{i\geq 1}  )$ which have the same joint distribution as the random elements
	in \eqref{eq:TWconv} and \eqref{eq:WstrongMark}, but satisfy
	\[
	\Big(\hat T'_{\sigma'(t)} -t ,  (\hat W_{\sigma'(t)+i})_{i\geq0}\Big)
	\asto  \Big(\hat T_{0}, (\hat W_i)_{i\geq0} \Big)\,,
	\]
	as \(t \toi\), in the product topology on $\RR\times \MM^{\NN_0}$.
	Moreover, by the strong law of large numbers (see e.g.\ Theorem 2.4.1 in \cite{dur10})
	\[
	\sum_{j=1}^i\varphi(\hat W_{\sigma'(t)+j})=\sum_{j=1}^i \hat X_{\sigma'(t)+j}
	\quad\text{and}\quad
	\sum_{j=1}^i\varphi(\hat W_{j})=\sum_{j=1}^i\hat X_{j}
	\]
	tend to $\infty$ almost surely, as $i \toi$. 
	Observe that by Lemma~\ref{4-lem inc}  
	\begin{align*}
	\theta_t \eta' 
	&\eind
	\sum_{i = 0}^\infty \delta_{\big(\hat T'_{\sigma'(t)} -t+ \sum_{j=1}^i \varphi\big(\hat W'_{\sigma'(t)+j}\big), \hat W'_{\sigma'(t)+i}\big) }
	\\
	&\hspace{3.5cm}\asto 
	\sum_{i = 0}^\infty \delta_{\big(\hat T_0 + \sum_{j=1}^i \varphi\big(\hat W_{j}\big), \hat W_{i}\big)} 
	\eind \eta \bigg|_{[0,\infty\rangle\times \mathbb{M}} \,,
	\end{align*}
	as \(t\toi\).
	This now yields the statement of the theorem on the state space
	$[0,\infty\rangle\times \mathbb{M}$.

	\noindent
	\textit{Step 6.}
	To extend convergence in $M_p([0,\infty\rangle \times \mathbb{M})$ to the  convergence in distribution
	with respect to the vague topology on $M_p(\RR \times \mathbb{M})$, consider an arbitrary 
	continuous bounded function $f:\RR \times \mathbb{M} \to \RR_+$ with a support on  a set of the form $[-h,h] \times \mathbb{M}$, for some $ h>0$. Introduce $f^{h}(u,w) = f(u-h,w)$ which is clearly continuous bounded and supported on $[0,2h] \times \mathbb{M}$. Now for any fixed $h>0$ as $t \toi$
	\begin{align*}
	    \EE \exp &\big( - \theta_t \eta' (f) \big)
		= \EE \exp \Bigg( - \sum_{i \geq -\sigma'(t)}   f \big(  T'_{\sigma'(t)+i} -t,    W'_{\sigma'(t)+i}\big)  \Bigg)\\
		&= \EE \exp \Bigg( - \sum_{i \geq -\sigma'(t)}   f^{h}
		\big(  T'_{\sigma'(t)+i} -(t-h),    W'_{\sigma'(t)+i}\big)  \Bigg)\\
		&= \EE \exp \Bigg( - \sum_{i \geq0}   f^{h}
		\big(  T'_{\sigma'(t-h)+i} -(t-h),    W'_{\sigma'(t-h)+i}\big)  \Bigg)\\
		&= \EE \exp \big( -\theta_{t-h} \eta' (f^{h})\big) 
		\longrightarrow \EE \exp \big( - \eta (f^{h})\big)\\
		&= \EE \exp \Bigg( - \sum_{i \geq0}   f^h
		\big(  T_{i},    W_{i}\big)  \Bigg)
		= \EE \exp \Bigg( - \sum_{i \in\ZZ}   f^h
		\big(  T_{i},    W_{i}\big)  \Bigg)\\
		&= \EE \exp \Bigg( - \sum_{i \in \ZZ}   f
		\big(  T_{i}-h,    W_{i}\big)  \Bigg)
		= \EE  \exp \big(- \eta  (f) \big) \,,
	\end{align*}
	where the last equality follows by the stationarity of the process $\eta$.
	Since $f$ was arbitrary, by Theorem 4.11 in \cite{kal17}, this concludes the proof.  
	
\end{proof}

\subsection{Proof of Proposition~\ref{thm:mm of stat pp}}
\begin{proof}
First, we introduce the notation. Let \(B+x=\{b+x:b\in B\}\), for all \(B\in\mathcal{B}(\RR)\) and \(x\in\RR\).
\begin{align*}
    \EE\xi (B)
    &=\EE \Bigg[\sum_{i\in\ZZ} \xi_i(B-T_i)\Bigg]
    =\EE  \Bigg[\sum_{i\in\ZZ} f_B (T_i, \xi_i)\Bigg]
    =\frac{1}{\mu} \int_\RR \EE [f_B(y,\xi'_1)] \rmd y\\
    &=\frac{1}{\mu}\EE \bigg[ \int_\RR \xi'_1(B-y)\rmd y\bigg]
    =\frac{1}{\mu}\EE  \bigg[ \int_\RR \int_\RR \mathbbm{1}_{\{s\in B-y\}} \rmd\xi'_1( s) \rmd y \bigg]\\
    &=\frac{1}{\mu}\EE \bigg[ \int_\RR \int_\RR \mathbbm{1}_{\{y\in B-s\}} \rmd y  \rmd \xi'_1(s) \bigg]
    =\frac{1}{\mu}\EE \bigg[ \int_\RR Leb(B) \rmd\xi'_1( s) \bigg]\\
    &=\frac{1}{\mu} \EE\xi'_1(\RR) Leb(B) 
    =\frac{1}{\mu} \EE L'_1 Leb(B)\,,
\end{align*}
where the third equality follows by the Campbell-Little-Mecke formula (see e.g.\ relation (1.2.19) in \cite{bb03}) applied to a stationary marked point process \(\sum_{i\in\ZZ}\delta_{(T_i, \xi_i)}\) for a nonnegative measurable function \(f_B:\RR\times M_p(\RR)\to \RR \) given by \(f_B(y,m)=m(B-y)\). Note that \(\EE\big[\sum_{i\in\ZZ}\delta_{T_i}[0,1]\big]=\frac{1}{\mu}\), see e.g.\ \cite{res92} p.\ 215. By construction, \(X_1\stackrel{d}{=}X'_1\) and \(L_1\stackrel{d}{=}L'_1\), which completes the proof.
\end{proof}

\subsection{Proof of Theorem~\ref{thm:ert}}
Let in the sequel \(f_+:=\max(f,0)\) and \(f_-:=-\min(f,0)\) denote the positive and negative part of a real-valued function \(f\).

\begin{proof}
	\textit{Step 1.} Note that the mark space \(\mathbb{M}=\mathbb{N}_0\times\mathbb{R}^{\NN}\times[0,\infty\rangle\) of the corresponding marked renewal point processes is again Polish, hence Theorem~\ref{lm0} can be applied, yielding
	\[
	\sum_{i\geq0}\delta_{(T'_i-t,W'_i)}\dto\sum_{i\in\mathbb{Z}}\delta_{(T_i,W_i)}\,,
	\]
	as \(t\to\infty\) in \(M_p(\mathbb{R}\times\mathbb{M})\). By the definition of convergence in distribution,
	\begin{equation}\label{step1}
		\sum_{i\geq0}\delta_{(T'_i-t,L'_i,(T'_{ij})_{j\leq L'_i})}\dto\sum_{i\in\mathbb{Z}}\delta_{(T_i,L_i,(T_{ij})_{j\leq L_i})}\,,
	\end{equation}
	as \(t\to\infty\) in \(M_p(\mathbb{R}\times\mathbb{M}^*)\), where \(\mathbb{M}^*=\mathbb{N}_0\times\mathbb{R}^\NN\). Here and in what follows, all indices \(j\) are assumed to be positive integers.

	\noindent\textit{Step 2.}
	Fix \(K>0\) and consider the mapping \(T_{K}:M_p(\mathbb{R}\times\mathbb{M}^*)\to M_p(\mathbb{R}\times \mathbb{R})\) given by
	\[
	T_{K}\Bigg(\sum_i\delta_{(x_i,y_i,(z_{ij})_{j\leq y_i})}\Bigg)
	=
	\sum_i\sum_{j=1}^{y_i}\delta_{(x_i,z_{ij})}\mathbbm{1}_{\{|x_i|\leq K\}}\,.
	\]
	It is well defined, i.e. for all \( m\in M_p(\mathbb{R}\times \mathbb{M}^*)\), \(T_{K}(m)\) is a locally finite point measure on \(\mathbb{R}\times \mathbb{R}\). Denote 
	\[
	N_K=\{m\in M_p(\mathbb{R}\times \mathbb{M}^*):m(\{\pm K\}\times \mathbb{M}^*)=0\}\,.
	\] 
	We will show that \(T_{K}\) is continuous on \(N_K\). Let \(m_1,m_2,\ldots \in M_p(\mathbb{R}\times\mathbb{M}^*)\), \(m\in N_K\) and \(m_n\vto m\). Then, by Proposition 2.8 in \cite{bp19}, there exist integers \(n_0, P\) and a labeling of the points of \(m\) and \(m_n\), \(n\geq n_0\) in \([-K,K] \times \mathbb{M}^*\) such that
	\[
	m_n\big|_{[-K,K]\times \mathbb{M}^*}= \sum_{i=1}^P\delta_{(x_i^{(n)},y_i^{(n)},(z_{ij}^{(n)})_{j\leq y_i^{(n)}})}\,,
	\] 
	\[
	m\big|_{[-K,K]\times \mathbb{M}^*}=\sum_{i=1}^P\delta_{(x_i,y_i,(z_{ij})_{j \leq y_i})}\
	\]
	and for all \(i=1,2,\ldots, P\)
	\[
	x_i^{(n)}\to x_i\,, \quad y_i^{(n)} \to y_i\,, \quad (z_{ij}^{(n)})_{j\leq y_i^{(n)}}\to (z_{ij})_{j\leq y_i}\,,
	\] as \(n\to\infty\). Observe that
	\[
	T_{K}(m_n)=\sum_{i=1}^P\sum_{j=1}^{y_i^{(n)}} \delta_{(x_i^{(n)},z_{ij}^{(n)})}\,, \quad \text{ for all } n\geq n_0\,,
	\]
	and
	\[
	T_{K}(m)=\sum_{i=1}^P\sum_{j=1}^{y_i}\delta_{(x_i,z_{ij})}\,,
	\]
	have only finitely many terms, 
	hence it holds that \(T_{K}(m_n)\to T_{K}(m)\), as \(n \to \infty\). Finally, an application of the continuous mapping theorem (see e.g.\ \cite{bil68}, Corollary 1, p.\ 31) to  (\ref{step1}) yields
	\[
	T_{K}\bigg(\sum_{i\geq0}\delta_{(T'_i-t,L'_i,(T'_{ij})_{j\leq L'_i})}\bigg)
	\dto T_{K}\bigg(\sum_{i\in\mathbb{Z}}\delta_{(T_i,L_i,(T_{ij})_{j\leq L_i})}\bigg)\,,
	\]
	thus
	\begin{equation}\label{step2}
		\sum_{i\geq0}\sum_{j=1}^{L'_i}\delta_{(T'_i-t,T'_{ij})}\bigg|_{[-K,K]\times\mathbb{R}}\dto
		\sum_{i\in\mathbb{Z}}\sum_{j=1}^{L_i}\delta_{(T_i,T_{ij})}\bigg|_{[-K,K]\times\mathbb{R}}\,,
	\end{equation}
	as \(t \to \infty\) in \(M_p(\mathbb{R}\times \mathbb{R})\). 
	Indeed, 
from Proposition 8.2 in \cite{lp17}, it follows that
	\begin{align*}
		\mathbb{P}\Bigg(\sum_{i\in\mathbb{Z}}\delta_{(T_i,L_i,(T_{ij})_{j\leq L_i})}\in N_K\Bigg)
		=\mathbb{P}\Bigg(\sum_{i\in\mathbb{Z}}\delta_{T_i}(\{\pm K\})=0\Bigg)
		=1\,.
	\end{align*}

	\noindent\textit{Step 3.}
	An application of the continuous mapping theorem to (\ref{step2}) yields
	\[
	\sum_{i\geq0}\sum_{j=1}^{L'_i}\mathbbm{1}_{\{|T'_i-t|\leq K\}}\delta_{T'_i-t+T'_{ij}}\dto
	\sum_{i\in\mathbb{Z}}\sum_{j=1}^{L_i}\mathbbm{1}_{\{|T_i|\leq K\}}\delta_{T_i+T_{ij}}\,,
	\]
	as \(t \to \infty\) in \(M_p(\RR)\).

	Taking the point process on the right hand side of the above relation and letting \(K\to\infty\), one obtains
	\begin{align}
		\sum_{i\in\mathbb{Z}}\sum_{j=1}^{L_i}\mathbbm{1}_{\{|T_i|\leq K\}}\delta_{T_i+T_{ij}}\dto
		\sum_{i\in\mathbb{Z}}\sum_{j=1}^{L_i}\delta_{T_i+T_{ij}}\,,\numberthis\label{4-eq:k toi}
	\end{align}
	in \(M_p(\mathbb{R})\). Indeed, let \(f:\RR \to \RR_+\) be an arbitrary continuous bounded function with a bounded support. Then, by the monotone convergence theorem,
	\[
	\sum_{i\in\mathbb{Z}}\sum_{j=1}^{L_i}\mathbbm{1}_{\{|T_i|\leq K\}} f(T_i+T_{ij}) \asto 
	\sum_{i\in\mathbb{Z}}\sum_{j=1}^{L_i}f(T_i+T_{ij})\,,
	\]
	as \(K\toi\). Now Theorem 4.11 in \cite{kal17} yields the equation \eqref{4-eq:k toi}.
	
	To complete the proof, by Theorem 4.2 in \cite{bil68} (which can be viewed as a variant of the Slutsky's lemma), and 
	Theorem 4.11 in \cite{kal17}, it suffices to show that for every \(u>0\) and every function \(f\) as above
	\begin{align*}
		\lim_{K\to\infty}\limsup_{t\to\infty} \mathbb{P} \Bigg(\Bigg| \sum_{i\geq0} \sum_{j=1}^{L'_i} \mathbbm{1}_{\{|T'_i-t|\leq K\}} f({T'_i-t+T'_{ij}}) - \sum_{i\geq0} \sum_{j=1}^{L'_i} f({T'_i-t+T'_{ij}}) \Bigg| >u \Bigg)=0\,.
	\end{align*}
	Assume that the support of \(f\) is contained in \(\langle -C,C\rangle,\) for some \(C>0\). Observe that for $K >C$
	\begin{gather*}
		\mathbb{P}\Bigg(\Bigg| \sum_{i\geq0} \sum_{j=1}^{L'_i} \mathbbm{1}_{\{|T'_i-t|\leq K\}} f({T'_i-t+T'_{ij}}) 
		- \sum_{i\geq0} \sum_{j=1}^{L'_i} f({T'_i-t+T'_{ij}}) \Bigg|>u\Bigg)\\
		= 
		\mathbb{P}\Bigg(\Bigg| \sum_{i\geq0} \sum_{j=1}^{L'_i} \mathbbm{1}_{\{|T'_i-t|> K\}} f({T'_i-t+T'_{ij}})\Bigg| >u \Bigg)  \\
		\leq 
		\mathbb{P}\big(\exists i\geq0, j\leq L'_i: |T'_i-t|> K, |T'_i-t+T'_{ij}|< C\big)\,.
	\end{gather*}
	Denote 
	\[
	\sigma(t) = \inf\{i\geq0:T_i' > t \}
	\] and observe that the last expression above is bounded by 
	\begin{align*}  	
		\mathbb{P}\big(\exists i: &\phantom{.} T'_i-t <- K, T'_i-t+R'_i>-C\big)
		+
		\mathbb{P}\big(\exists i: T'_i-t> K, T'_i-t - R'_i< C\big)\\
		& \leq 
		\mathbb{P}\big( T'_0-t<- K, T'_0-t+R'_0>-C\big) \\
		& +  
		\sum_{i\geq 1}\mathbb{P} \big(T'_{i-1}-t<- K,T '_{i-1}-t+X_i'+R'_i>-C\big)\\
		& + 
		\mathbb{P} \big(T'_{\sigma(t+K)}-(t+K) - R'_{\sigma(t+K)} < C - K\big) \\
		& +
		\sum_{i\geq 1}\mathbb{P}\big(T'_{i-1} -t  > K, T'_{i-1}+X_i'-t-R'_i < C\big) \\
		& =:
		I_0^+(t, K) +  I_1^+(t, K) + I_0^-(t, K) + I_1^-(t, K)\,.
	\end{align*}
	It is easy to see that
	\begin{align*}
		\lim_{K\toi}\limsup_{t\toi} I_0^+(t, K)
		\leq
		\lim_{K\toi} \PP(R_0'>K-C)
		=
		0\,.
	\end{align*}
	Denote by \(U'=\sum_{i\geq0}F_{T'_i}\) the renewal function, where \(F_{T'_i}\) is the distribution function of the arrival time \(T'_i\). Then,
	using the independence between \(T'_{i-1}\) and $X'_i, R'_i$, for all \(i\geq1\), we obtain
	\begin{align*}
		I_1^+(t, K) = \sum_{i\geq1}\int_0^{t-K}\mathbb{P}(y-t+X_i'+R_i'>-C)\rmd F_{T'_{i-1}}(y)\\
		= 
		\int_0^{t-K}\mathbb{P}(X_1'+R'_1 + C - K> t -K -y)\rmd U'(y)\,.
	\end{align*}
	Observe that the function $y \mapsto \mathbb{P}(X_1'+R'_1 + C - K> y)$ is nonnegative, nonincreasing and Riemann integrable on \(\RR_+\), since
	\[
	\int_0 ^\infty \mathbb{P}(X_1'+R'_1 + C - K> y) \rmd y 
	= \int_{K-C}^\infty \mathbb{P}(X_1'+R'_1> y) \rmd y 
	\leq \EE[X_1'+R_1']
	<\infty\,,
	\] 
	by integrability assumptions on \(X'_1\) and \(R'_1\).
	By Remark 3.10.3 in \cite{res92}, it is also a directly Riemann integrable function on \(\RR_+\), hence by the key renewal theorem (see e.g.\ Theorem 3.10.1 in \cite{res92}) and integrability assumptions on \(X'_1\) and \(R'_1\)
	\[
	\lim_{K\toi}\limsup_{t\toi} I_1^+(t, K)
	=
	\lim_{K\toi} \frac{1}{\mu} \mathbb{E}(X_1'+R'_1 + C - K)_+
	=
	0\,.
	\]
	It is a simple consequence of Theorem \ref{lm0} and Theorem 4.11 in \cite{kal17} that
	\[
	(T'_{\sigma(t)}-t,R'_{\sigma(t)})\dto(T_0,R_0)\,,
	\]
	as \(t \toi\), where \(R_0 = {\sup_{j\leq L_0}\vert T_{0j}\vert}\). Hence, by the continuous mapping argument it follows that
	\[
	\lim_{K\toi}\limsup_{t\toi} I_0^-(t, K)
	=\lim_{K\toi}\mathbb{P}(T_{0} - R_0 < C - K)
	\leq \lim_{K\toi}\mathbb{P} (R_0> K-C)
	=0\,.
	\]
	As before, using independence assumption between \(T'_{i-1}\) and $X'_i, R'_i$, for all \(i\geq1\), we obtain
	\begin{gather*}
		I_1^-(t, K)
		= 
		\int_{t+K}^{\infty}\mathbb{P}(X_1'-R'_1 - C + K< t +K -y) \rmd U'(y)\\
		\leq
		\int_{t+K}^{\infty}\mathbb{P}(-R'_1 - C + K< t +K -y) \rmd U'(y)
		\,.
	\end{gather*}
    Note that the function \(g:\RR\to\RR_+\) given by $g(y)=\mathbb{P}(-R'_1 - C + K< y)$ is nonnegative, nondecreasing and, by integrability assumption on \(R'_1\), Riemann integrable on \(\langle-\infty,0]\).
	Another application of Remark 3.10.3 in \cite{res92} (now applied to \(g(-y)\)) yields that $g$  is a directly Riemann integrable function on \(\langle-\infty,0]\).
	Hence, by a variant of the key renewal theorem and  integrability assumptions on \(X'_1\) and \(R'_1\)
	\[
	\lim_{K\toi}\limsup_{t\toi} I_1^-(t, K)
	=
	\lim_{K\toi} \frac{1}{\mu} \mathbb{E}(-R'_1 - C + K)_-
	=
	0\,,
	\]
	which completes the proof.
	
\end{proof}

\subsection{Proof of Lemma~\ref{thm:ui}}

\begin{proof}
	Based on the position of the cluster centers \(T'_i\), we obtain the following decomposition
	\begin{align*}
		\xi'\langle t, t+x]
		=\xi'_{1}\langle t, t+x]+\xi'_{2}\langle t, t+x]+\xi'_{3}\langle t, t+x]\,,
	\end{align*}
	where
	\[
	\xi'_{1}\langle t, t+x]
	=\sum_{i\geq0}\sum_{j=1}^{L'_i} \mathbbm{1}_{\{T'_i+T'_{ij}\in\langle t, t+x]\}} \mathbbm{1}_{\{T'_i\leq t\}}\,,
	\]
	\[
	\xi'_{2}\langle t, t+x]
	=\sum_{i\geq0}\sum_{j=1}^{L'_i} \mathbbm{1}_{\{T'_i+T'_{ij}\in\langle t, t+x]\}} \mathbbm{1}_{\{ t< T'_i \leq t+x\}}\,
	\]
	and
	\[
	\xi'_{3}\langle t, t+x]
	=\sum_{i\geq0}\sum_{j=1}^{L'_i} \mathbbm{1}_{\{T'_i+T'_{ij}\in\langle t, t+x]\}} \mathbbm{1}_{\{T'_i> t+x\}}\,.
	\] 
	Note that if \(T'_{ij}\) take only negative values, \(\xi'_1\langle t, t+x]\) is identically 0. Hence, when dealing with this term, one needs to take care only of \(T'_{ij}\geq0\),  for all \(i,j\leq L'_i\). Similarly, when dealing with \(\xi'_3\langle t, t+x]\) one can ignore positive \(T'_{ij}\) and assume that \(T'_{ij}\leq0\), for all \(i,j\leq L'_i\). These simple observations will be useful in the sequel.
    Here we show uniform integrability of the term $\xi'_1$,
	the other two terms can be handled similarly, we refer to~\cite{MDthesis} for details. Once the uniform integrability of all three terms is proved, statement of the lemma follows by Theorem 4.6 in Chapter 5 of \cite{gut05}.

	Observe that 
	\begin{align*}
		\xi'_{1}\langle t, t+x]
		&\leq  \sum_{i\geq0}{L'_i} \mathbbm{1}_{\{T'_i+R'_i>t\}} \mathbbm{1}_{\{T'_i\leq t\}}\\
		&\leq  L'_0
		+  \sum_{i\geq1} L'_i \mathbbm{1}_{\{T'_{i}-t+R'_i>0\}} \mathbbm{1}_{\{T'_{i}\leq t\}}
		=:L'_0 + I'_1(t)\,.\numberthis\label{4-eq:e1}
	\end{align*}
	Trivially, an integrable random variable \(L'_0\) is uniformly integrable. For the other term in the above relation, using the independence between \(T'_{i-1}\) and \(X'_i\), \(L'_i\), \(R'_i\), for all \(i\geq1\), we get
	\begin{align*}
		\EE I'_1(t)
		&=\EE\Bigg[\sum_{i\geq1} \EE\Big[L'_i \mathbbm{1}_{\{T'_{i-1}+X'_i-t+R'_i>0\}} \mathbbm{1}_{\{T'_{i-1}+X'_i\leq t\}} \mathbbm{1}_{\{T'_{i-1}\leq t\}}|T'_{i-1} \Big] \Bigg]\\
		&=\sum_{i\geq1}\int_{0}^t\EE \Big[L'_{i} \mathbbm{1}_{\{X'_{i}+R'_{i}>t-y\}} \mathbbm{1}_{\{X'_i\leq t-y\}} \Big] \rmd F_{T'_{i-1}}(y)
		\\
		&=\int_0^t \EE\Big[ L'_1 \mathbbm{1}_{\{X'_1+R'_1>t-y\}} \mathbbm{1}_{\{X'_1\leq t-y\}} \Big] \rmd U'(y)\,.
	\end{align*}
	By the key renewal theorem, as \(t\toi\), the above expression converges to
	\[
	\frac{1}{\mu} \int_0^\infty \EE \Big[L'_1 \mathbbm{1}_{\{X'_1+R'_1>y\}} \mathbbm{1}_{\{X'_1\leq y\}} \Big] \rmd y
	=\frac{1}{\mu} \EE \Bigg[L'_1 \int_{X'_1}^{X'_1+R'_1} \rmd y\Bigg]
	=\frac{1}{\mu} \EE [L'_1R'_1]
	=:c_1\,,
	\]
	if 
	\(y\mapsto \EE\Big[ L'_1 \mathbbm{1}_{\{X'_1+R'_1>y\}} \mathbbm{1}_{\{X'_1\leq y\}} \Big]\) is a directly Riemann integrable function on \(\RR_+\). 
    For instance, this can be shown by Remarks 3.10.2, 3.10.5 in \cite{res92} and Assumption \ref{ass:fma}.

	Let \(R_i=\sup_{j\leq L_i}|T_{ij}|\), for all \(i\in\ZZ\). We now show that, as \(t\toi\), 
	\[
	I'_1(t)
	\dto
	\sum_{i\leq-1} {L_{i}} \mathbbm{1}_{\{T_{i}+R_{i}>0\}}=:I'_1\,,
	\]
	by checking the conditions of Theorem 4.2 from \cite{bil68}.
	Simple modifications in the proof of Theorem \ref{thm:ert} give
	\[
	\sum_{i\geq1} L'_i \delta_{T'_{i}-t+R'_i} \mathbbm{1}_{\{T'_{i}\leq t\}}
	\dto
	\sum_{i\leq -1}L_{i}\delta_{T_{i}+R_{i}}\,,
	\]
	hence Theorem 4.11 in \cite{kal17} yields
	\[
	\sum_{i\geq1} L'_i \mathbbm{1}_{\{0< T'_{i}-t+R'_i\leq M\}} \mathbbm{1}_{\{T'_{i}\leq t\}}
	\dto
	\sum_{i\leq-1}L_{i}\mathbbm{1}_{\{0< T_{i}+R_{i}\leq M\}}\,,
	\]
	as \(t\toi\), for an arbitrary \(M>0\).
	Considering the random variable on the right hand side, letting \(M\toi\) and applying the monotone convergence theorem, we get
	\[
	\sum_{i\leq-1}L_{i}\mathbbm{1}_{\{0< T_{i}+R_{i}\leq M\}}
	\asto 
	\sum_{i\leq-1}L_{i}\mathbbm{1}_{\{T_{i}+R_{i}>0\}}\,.
	\]
	It now remains to show that for every \(u>0\)
	\[
	\lim_{M\toi}\limsup_{t\toi}\PP\Bigg(\Bigg|\sum_{i\geq1} L'_{i} \mathbbm{1}_{\{0<T'_{i}-t+R'_{i}\leq M\}} \mathbbm{1}_{\{T'_{i}\leq t\}}
	-
	I'_1(t)
	\Bigg|>u\Bigg)=0\,.
	\]
	Indeed,
	\begin{align*}
		\PP \Bigg(\Bigg|\sum_{i\geq1} L'_{i} 
		&  
		\mathbbm{1}_{\{0<T'_{i}-t+R'_{i}\leq M\}} \mathbbm{1}_{\{T'_{i}\leq t\}}
		-\sum_{i\geq1} L'_{i} \mathbbm{1}_{\{ T'_{i}-t+R'_{i}>0\}}\mathbbm{1}_{\{T'_{i}\leq t\}}
		\Bigg|>u \Bigg)\\
		&\leq
		\PP(\exists i\geq1: T'_{i}-t+R'_{i}> M , T'_{i-1}\leq t)\\
		&\leq
		\sum_{i\geq1} \PP(T'_{i-1}+X'_{i}-t+R'_{i}> M, T'_{i-1}\leq t)\\
		&=
		\int_0^t \PP(X'_{1}+R'_{1}-M>t-y) \rmd U'(y)\,,
	\end{align*}
	which converges by the key renewal theorem to
	\[
	\frac{1}{\mu}\int_0^\infty \PP(X'_{1}+R'_{1}-M>y)\rmd y
	= \frac{1}{\mu}\EE(X'_1+R'_1-M)_+\,,
	\]
	as \(t\toi\). It is easy to show that the integrability assumptions on \(X'_1\) and \(R'_1\) assure that the conditions of the key renewal theorem are satisfied. Under the same integrability assumptions, we conclude that the above  expression converges to 0, as \(M \toi\).

	By construction, \(T_i\) is independent of \(R_i, L_i\), and \(T_{i}\stackrel{d}{=}-T_{-i-1}\), \(X_1\stackrel{d}{=}X'_1\), \(R_{i}\stackrel{d}{=}R'_{1}\) and \(L_{i}\stackrel{d}{=}L'_{1}\), for all \(i\leq-1\).
	Hence, we obtain
	\begin{align*}
		\EE I'_1&
		=\EE \Bigg[ \sum_{i\leq-1} L_{i}\mathbbm{1}_{\{T_{i}+R_{i}>0\}} \Bigg]
		=\EE\Bigg[\sum_{i=-\infty}^{-1} \EE \big[L_i\mathbbm{1}_{\{T_{i}+R_{i}>0\}}|T_i\big]\Bigg]\\
		&=\int_{-\infty}^0 \sum_{i=-\infty}^{-1} \EE\big[L_i\mathbbm{1}_{\{y+R_i>0\}}\big] \rmd F_{T_i}(y)
		=\int_{-\infty}^0 \sum_{i=0}^{\infty} \EE\big[L'_{1}\mathbbm{1}_{\{y+R'_{1}>0\}}\big] \rmd F_{T_i}(-y)\\
		&=\int_0^{\infty} \EE\big[L'_1\mathbbm{1}_{\{R'_1>y\}}\big] \rmd U(y)
		=\frac{1}{\EE X_1} \int_0^{\infty} \EE\big[L'_1\mathbbm{1}_{\{R'_1>y\}}\big] \rmd y
		=\frac{1}{\mu} \EE[L'_1R'_1]
		=c_1\,,
	\end{align*}
	which is finite by the assumptions.

	We have shown that 
	\[
	\EE I'_1(t)\to c_1\,,\quad
	I'_1(t)\dto I'_1
	\quad\text{and}\quad 
	\EE I'_1 = c_1<\infty\,,
	\]
	hence, by Lemma 5.11 in \cite{kal21}, \( I'_1(t)\), \(t\geq0\) are uniformly integrable random variables. Applying first Theorem 4.6 and then Theorem 4.5 in Chapter 5 of \cite{gut05} to \eqref{4-eq:e1}, we conclude that \(\xi'_1\langle t, t+x]\), \(t\geq0\) are also uniformly integrable random variables.

\end{proof}

\subsection{Proof of Lemma \ref{lem:UxiFin}}

\begin{proof}
	For all \(t\geq0\)
	\begin{align*}
		U_{\xi'}(t)
		&\leq \EE L'_0 
		+ \EE \Bigg[ \sum_{i=1}^\infty {L'_i} \mathbbm{1}_{\{T'_i - R'_i\leq t\}} \mathbbm{1}_{\{T'_i \leq t\}} \Bigg] 
		+ \EE \Bigg[ \sum_{i=1}^\infty {L'_i} \mathbbm{1}_{\{T'_i-R'_i\leq t\}} \mathbbm{1}_{\{T'_i> t\}} \Bigg]  \,. \numberthis\label{4-eq:u xi}
	\end{align*}
	The first term in \eqref{4-eq:u xi} is finite by the assumption. For the second term, observe that
    \begin{align*}
		\EE\Bigg[\sum_{i=1}^\infty L'_i \mathbbm{1}_{\{T'_i-R'_i\leq t\}} \mathbbm{1}_{\{T'_i\leq t\}} \Bigg]
		\leq \EE\Bigg[\sum_{i=1}^\infty L'_i \mathbbm{1}_{\{T'_{i-1}\leq t\}} \Bigg]
		= \EE L'_1 U'(t)
		<\infty\,,
	\end{align*}
	by the integrability assumption on \(L'_1\) and Theorem 3.3. in \cite{res92}.
	As in the proof of Lemma \ref{thm:ui}, one can show that as \(t\toi\)
	\begin{align}\label{4-eq:ufin}
		\EE \Bigg[ \sum_{i=1}^\infty {L'_i} \mathbbm{1}_{\{T'_i-R'_i\leq t\}} \mathbbm{1}_{\{T'_i> t\}} \Bigg]
		\to
		c<\infty\,,
	\end{align}
	where 
	\[
	c
	=\frac{1}{\mu} \EE [L'_1(X'_1-(X'_1-R'_1)_+)]
	+
	\frac{1}{\mu} \EE [L'_1(R'_1-X'_1)_+]\,.
	\] 
	Indeed, observe that
	\begin{align*}
		\EE &\Bigg[ \sum_{i=1}^\infty {L'_i} \mathbbm{1}_{\{T'_i-R'_i\leq t\}} \mathbbm{1}_{\{T'_i> t\}} \Bigg]\\
		&=\EE \Bigg[ \sum_{i\geq1} L'_i \mathbbm{1}_{\{T'_{i}-R'_i\leq t\}} \mathbbm{1}_{\{T'_{i}> t\}}\mathbbm{1}_{\{T'_{i-1}\leq t\}} \Bigg]
		+
		\EE \Bigg[ \sum_{i\geq1} L'_i \mathbbm{1}_{\{T'_{i}-R'_i\leq t\}} \mathbbm{1}_{\{T'_{i-1}> t\}} \Bigg]\\
		&=\int_0^{t} \EE\big[ L'_1 \mathbbm{1}_{\{X'_1-R'_1 \leq t-y\}} \mathbbm{1}_{\{X'_{1} > t-y\}} \big] \rmd U'(y)
		+
		\int_{t}^\infty \EE\big[ L'_1 \mathbbm{1}_{\{X'_1-R'_1 \leq t-y\}}\big] \rmd U'(y)\,,
	\end{align*}
	where we have used the independence between \(T'_{i-1}\) and \(X'_i\), \(R'_i\), for all \(i\geq1\), in the last equality. It is easy to show that integrability assumptions together with Remark 3.10.2 and Remark 3.10.3 in \cite{res92} justify the use of the key renewal theorem, hence
	\begin{align*}
		\EE \Bigg[ \sum_{i=1}^\infty {L'_i} \mathbbm{1}_{\{T'_i-R'_i\leq t\}} \mathbbm{1}_{\{T'_i> t\}} \Bigg]
		\to c\,.
	\end{align*}
By our integrability assumptions, the limiting value $c$ is finite. Therefore, the left hand is finite for all large enough $t$ as well. From \eqref{4-eq:u xi}, we can conclude that  the nondecreasing renewal function $U_{\xi'}(t)$ has to be finite for all $t$  sufficiently large. Finally, since it is nonnegative and nondecreasing,  the renewal function has to be finite for all \(t\).		

\end{proof}

\subsection{Proof of Corollary \ref{cor:KRT}}

Recall, the definition of the renewal function \(U_{\xi'}\) and the corresponding  renewal measure.
Let \(G\) denote distribution of \(X'_0\) and \(F\) common distribution of \(X'_i,i\geq1\). Consider first the classical renewal equation
\begin{align*}
U'(t)
=G(t)+F*U'(t),\quad t\geq0\,,
\end{align*}
where
$U'(t)
=\EE\Big[\sum_{i=0}^\infty \mathbbm{1}_{\{T'_i\leq t\}}\Big]
=\EE\Big[\sum_{i=0}^\infty \delta_{T'_i}[0,t]\Big]$.
Using the independence assumption between \(T'_{i-1}\) and \(L'_i, (T'_{ij})_j\), for all \(i\geq1\), we get that
\begin{align*}
U_{\xi'}(t)
&=\mathbb{E} \Bigg[ \sum_{i=0}^\infty\sum_{j=1}^{L'_i}\delta_{T'_i+T'_{ij}}\langle-\infty,t]\Bigg]\\
&= \phi(t) + \mathbb{E}\Bigg[\sum_{i=1}^{\infty} \mathbb{E} \Bigg[\sum_{j=1}^{L'_i} \mathbbm{1}_{\{T'_{i-1}+X'_i+T'_{ij}\leq t\}}|T'_{i-1}\Bigg]\Bigg]\\
&= \phi(t) + \int_0^\infty \sum_{i=1}^{\infty} \mathbb{E} \Bigg[\sum_{j=1}^{L'_i} \mathbbm{1}_{\{X'_i+T'_{ij}\leq t-y\}}\Bigg] \rmd F'_{T_{i-1}}(y)\\
&=\phi(t) + \int_0^\infty\psi(t-y) \rmd U'(y)\,,
\end{align*}
where \(\phi(t)=\mathbb{E}\Big[\sum_{j=1}^{L'_0}\mathbbm{1}_{\{T'_0+T'_{0j}\leq t\}}\Big]\) 
and 
\(\psi(t)=\mathbb{E} \Big[\sum_{j=1}^{L'_1} \mathbbm{1}_{\{X'_1+T'_{1j}\leq t\}}\Big]\), \(t\in\RR\). 
One now easily obtains a convolution equation
\begin{align*}
U_{\xi'}(t)
&=\phi(t) + U'*\psi(t)\\
&=\phi(t) + (G+F*U')*\psi(t)\\
&=(1-F)*\phi(t)+G*\psi(t)+F*\phi(t)+F*U'*\psi(t)\\
&=(1-F)*\phi(t)+G*\psi(t)+F*U_{\xi'}(t)\,,\numberthis\label{eq:conv}
\end{align*}
for all \(t\geq0\).

\begin{proof}[Proof of Corollary \ref{cor:KRT}]
	The proof is divided into three steps, with successively more complex \(g\).

	\noindent\textit{Step 1.} Suppose
	\[
	g(t)=\mathbbm{1}_{[(k-1)h,kh\rangle}(t)\,,\quad t\geq0\,,
	\]
	for a fixed \(k\in\mathbb{N}\) and \(h>0\). Then \(g(t-y)=1\) if and only if \(t-kh<y\leq t-(k-1)h\), thus
	\begin{align*}
	\int_0^t g(t-y) \rmd U_{\xi'}(y)
	=U_{\xi'}\langle t-kh, t-(k-1)h]\mathbbm{1}_{\{t\geq kh\}}
	+U_{\xi'}\langle 0, t-(k-1)h]\mathbbm{1}_{\{(k-1)h\leq t< kh\}}\,.
	\end{align*}
	Observe that for all \(t\in [ (k-1)h,kh\rangle\),
	\begin{equation}\label{4-eq:2nd part}
	U_{\xi'}\langle 0, t-(k-1)h]
	\leq U_{\xi'}(h)
	<\infty\,,
	\end{equation}
	hence Corollary \ref{cor:brt} yields
	\[
	\lim_{t\to\infty} \int_0^t g(t-y)\rmd U_{\xi'}(y)
	= \frac{1}{\mu}\EE L'_1h+0
	= \frac{1}{\mu}\EE L'_1 \int_0^\infty g(y)\rmd y\,.
	\]

	\noindent\textit{Step 2.} Suppose
	\[
	g(t)=\sum_{k\geq1}c_k\mathbbm{1}_{[(k-1)h,kh\rangle}(t)\,,\quad t\geq0\,,
	\]
	where \((c_k)_{k\geq1}\) is a sequence of nonnegative numbers such that \(\sum_{k\geq1}c_k<\infty\) and \(h\) is chosen so that \(F(h)<1\). Then
	\begin{align*}
	\int_0^t g(t-y)\rmd U_{\xi'}(y)
	&= \sum_{k=1}^\infty c_k
	U_{\xi'}\langle t-kh, t-(k-1)h]\mathbbm{1}_{\{t\geq kh\}}\\
	&+ \sum_{k=1}^\infty c_k U_{\xi'}\langle 0, t-(k-1)h]\mathbbm{1}_{\{(k-1)h \leq t < kh\}}\,.
	\end{align*}
	For each \(k\in \NN\) and \(t\geq kh\), we have
	\[
	\lim_{t\to\infty} 
	U_{\xi'}\langle t-kh, t-(k-1)h]
	= \frac{1}{\mu}\EE L'_1 h\,.
	\]
	It follows from the equation (\ref{eq:conv}), that 
	\[
	\phi(t)+G*\psi(t)
	\geq(1-F)*\phi(t)+G*\psi(t)
	=(1-F)*U_{\xi'}(t)\,,
	\]
	hence
	\begin{align*}
	\EE L'_0 + \EE L'_1
	& \geq \EE L'_0
	+ \int_0^\infty \EE L'_1 \rmd G(y)\\
	& \geq \phi(t-(k-1)h)
	+ \int_0^\infty\psi(t-(k-1)h-y) \rmd G(y)\\
	& \geq\int_{t-kh}^{t-(k-1)h}(1-F(t-(k-1)h-y)) \rmd U_{\xi'}(y)\\
	& \geq (1-F(h)) U_{\xi'}\langle t-kh, t-(k-1)h]\,.
	\end{align*}
	Therefore
	\[
	\sup_{t,k}
	U_{\xi'}\langle t-kh, t-(k-1)h]
	\leq \frac{\EE L'_0 + \EE L'_1}{1-F(h)}\,,
	\]
	which is finite by integrability assumptions on \(L'_0\), \(L'_1\) and choice of \(h\). For an arbitrary \(k\) and \(t\in[(k-1)h,kh\rangle\), recall \eqref{4-eq:2nd part}. Hence, by the dominated convergence theorem 
	\[
	\lim_{t\to\infty} \int_0^t g(t-y)\rmd U_{\xi'}(y)
	=\frac{1}{\mu}\EE L'_1 h\sum_{k\geq1}c_k + 0
	=\frac{1}{\mu}\EE L'_1\int_0^\infty g(y) \rmd y\,.
	\]

	\noindent\textit{Step 3.} Let \(g\) be an arbitrary directly Riemann integrable function on \(\RR_+\) and define
	\begin{align*}
	\overline{g}(t)&=\sum_{k=1}^\infty \sup_{(k-1)h\leq y<kh}g(y) \mathbbm{1}_{[(k-1)h,kh\rangle}(t)\\
	\underline{g}(t)&=\sum_{k=1}^\infty \inf_{(k-1)h\leq y<kh}g(y) \mathbbm{1}_{[(k-1)h,kh\rangle}(t)\,,
	\end{align*}
	for all \(t\geq0\).
	By the definition of direct Riemann integrability on \(\RR_+\) (see e.g.\ Chapter XI in \cite{fel71})
	\[
	\sum_{k=1}^\infty \inf_{(k-1)h\leq y<kh} g(y)
	\leq \sum_{k=1}^\infty \sup_{(k-1)h\leq y<kh} g(y)
	<\infty\,,
	\]
	thus \(\overline{g}\) and \(\underline{g}\) have the same structure as the functions considered in Step 2. Denote
	\begin{align*}
	\overline{\sigma}(h)=h \sum_{k=1}^\infty \sup_{(k-1)h\leq y<kh}g(y) \quad \text{ and } \quad
	\underline{\sigma}(h)=h \sum_{k=1}^\infty \inf_{(k-1)h\leq y<kh}g(y) \,.
	\end{align*}
	It follows that
	\begin{align*}
	\frac{1}{\mu}\EE L'_1\underline{\sigma}(h)
	&= \liminf_{t\to\infty} \int_0^t \underline{g}(t-y)\rmd U_{\xi'}(y)
	\leq\liminf_{t\to\infty} \int_0^t g(t-y)\rmd U_{\xi'}(y)\\
	&\leq\limsup_{t\to\infty} \int_0^t g(t-y)\rmd U_{\xi'}(y)
	\leq\limsup_{t\to\infty} \int_0^t \overline{g}(t-y)\rmd U_{\xi'}(y)
	=\frac{1}{\mu}\EE L'_1\overline{\sigma}(h)\,,\numberthis\label{4-eq:krt ineq}
	\end{align*}
	since \(\underline{g}\leq g\leq \overline{g}\). By the definition of direct Riemann integrability on \(\RR_+\)
	\[
	\lim_{h\to0}(\overline{\sigma}(h)-\underline{\sigma}(h))=0
	\] and by Remark 3.10.2 in \cite{res92}
	\[
	\lim_{h\to0}\overline{\sigma}(h)=\int_{0}^\infty g(y)\rmd y\,.
	\]
	Hence, letting \(h\to0\) in \eqref{4-eq:krt ineq} the result follows.
\end{proof}

\section*{Acknowledgment}
We sincerely thank the anonymous referee for careful reading of the manuscript and several very helpful comments. The work of Bojan Basrak was financed in part by the Croatian-Swiss Research Program of the Croatian Science Foundation and the Swiss National Science Foundation - grant IZHRZ0 - 180549 and HRZZ grant IP-2022-10-2277.

\end{document}